\documentclass[12pt]{article}

\usepackage[margin=1in]{geometry}
\usepackage{amsmath, amsthm, amssymb}
\usepackage{mathtools}
\usepackage{alphabeta}
\usepackage{bbm}
\usepackage{cite}
\usepackage{hyperref}
\usepackage{enumitem}
\usepackage{xcolor}
\usepackage{enumitem}
\usepackage{comment}
\usepackage{setspace}
\newtheorem{assumption}{Assumption}

\newtheorem{theorem}{Theorem}[section]
\newtheorem{lemma}{Lemma}
\newtheorem{proposition}{Proposition}[section]
\newtheorem{corollary}{Corollary}[section]

\theoremstyle{definition}
\newtheorem{definition}{Definition}

\theoremstyle{remark}
\newtheorem{remark}{Remark}

\begin{document}

\begin{center}
    {\Large \bf Global in time solutions to stochastic reaction-diffusion systems with superlinear reactions satisfying a triangular control of mass} \\[0.5cm]

    \textsc{Dionysis Milesis and Michael Salins} \\
    \vspace{0.6cm}
    \today
\end{center}


\begin{abstract}
We study systems of reaction–diffusion equations perturbed by multiplicative noise, where the reaction terms satisfy quasipositivity, a triangular mass-control structure, and polynomial growth. Our results apply to a broad class of reaction–diffusion systems arising, most notably, in chemistry and biology. In the deterministic setting these assumptions are known to guarantee the global existence of solutions. In the stochastic setting, however, reaction-diffusion systems have typically been analyzed under different assumptions on the reactions that preclude many natural models, such as reversible chemical reaction networks modeled by the mass-action law, and the question of global existence and uniqueness under a mass-control structure has remained open. In this work, we show that stochastically perturbing reaction–diffusion systems with triangular control of mass by suitable multiplicative noise leads to solutions that exist for all time.
\end{abstract}

\section{Introduction}
Let $i\in \{1,...,m\}$ and consider the following reaction--diffusion equations perturbed by multiplicative noise
\begin{equation}\label{eq: initial mxr system}
    \begin{cases}
        \partial_t u_i(t,x) - d_i\Delta u_i(t,x) = f_i(u(t,x))+\sum\limits_{k=1}^r \sigma_{ik}(u(t,x))\dot{W}_k(t,x)\text{ in } (0,T)\times D,\\
        u_i = 0 \text{ or } \dfrac{\partial u_i}{\partial n} = 0 \text{ on } (0,T)\times \partial D,\\
        u_i(0,x) = u_{i0}(x),
    \end{cases}
\end{equation}
where $d_i>0$ are the diffusion coefficients and $D \subset \mathbb R^d$ is an open and bounded domain with sufficiently smooth boundary. If $f$ and $\sigma$ are globally Lipschitz continuous functions, it is a classical result (see, for instance, \cite{da2014stochastic} or \cite{dalang2026stochastic}) that \eqref{eq: initial mxr system} has a unique global solution. In \cite{cerrai2003stochastic}, Cerrai proved existence and uniqueness under more general conditions on the reactions. In particular, Cerrai considered reaction terms that could be written as
\[
f_i(a) = g_i(a) + q_i(a_i),
\]
where $g_i(a)$ is a locally Lipschitz continuous function of the whole system with linear growth and $q_i(a_i)$ is a real function of the $i$th component only satisfying a strong dissipative condition of the form
\[
\bigl(q_i(s+r)-q_i(s)\bigr)r \leq -a|r|^{\beta+1} +C(1+|s|^{\beta+1}),
\]
for some $a,\beta>0$ and $C\geq 0$. Some years later, the second author of the present paper relaxed this dissipativity condition of Cerrai by assuming $q_i$ to be a decreasing function \cite{salins2021systemsLDP}. The prototypical examples both Cerrai and the second author of the present paper had in mind were polynomials of odd order with negative leading coefficient of the form 
\[
f_i(a) = -a_i^{2k+1}+p_i(a),
\]
where $p_i(a)$ are polynomials on $\mathbb R^m$ with degrees less than or equal to $2k$.

Without stochastic perturbation, reaction-diffusion equations have  been studied under a substantially different set of assumptions on the reactions encoded in either of the terms \textit{mass control}, \textit{mass dissipation}, or \textit{mass conservation}. By \textit{mass control} we mean that the 
reactions satisfy 
\begin{equation}\label{eq: M}
f_1(a)+...+f_m(a)\leq C(1+a_1+....+a_m),\tag{M}
\end{equation}
where $C\in \mathbb R$. In the context of reaction-diffusion systems with control of mass structures, the solutions $u_i(t,x)$ can be interpreted as molar concentrations of chemicals. Therefore, the quantity
\begin{equation}\label{eq: Total mass of system}
\sum_{i=1}^m \int_D u_i(t,x)dx
\end{equation}
is the total mass of the system, and \eqref{eq: M} implies that \eqref{eq: Total mass of system} is bounded above by 
\[
c(e^{tC}-1) + e^{tC}\int_D \sum_{i=1}^m u_i(0,x)dx.
\]
Therefore, the mass control structure \eqref{eq: M} implies that the total mass of the system remains bounded on any interval (see, for instance, the calculations leading to (1.9) in \cite{pierre2010global}). In particular, if $C=0$, then the total mass of the system is conserved.

The term \textit{mass control} is primarily motivated by systems modeling reversible chemical reactions using the mass-action law. For example, a reversible reaction between  the reacting species $A,B,C,D$ with molar concentrations $a,b,c,d$ that has the form
\[
A+B \xrightleftharpoons{} C+D
\]
may be described \cite{pierre2010global}, using the mass-action law, by the reactions
\begin{equation}\label{eq: A+B-><-C+D reactions}
\begin{cases}
    f_1(a,b,c,d) = -k_1ab+k_2cd,\\
    f_2(a,b,c,d) = -k_1ab+k_2cd,\\
    f_3(a,b,c,d) = k_1ab-k_2cd,\\
    f_4(a,b,c,d) = k_1ab-k_2cd,
\end{cases}
\end{equation}
where $k_1,k_2>0$.
\textit{Mass control} in this case  refers to the property  $\sum\limits_{i=1}^4 f_i(a,b,c,d) = 0,$ which may be thought of as mass conservation. Examples of such chemical reaction networks include equilibrium esterification and ketalization processes; for instance, acetic acid and ethanol reacting reversibly to form ethyl acetate and water,
$CH_3COOH + C_2H_5OH \rightleftharpoons CH_3COOC_2H_5 + H_2O$
\cite{esterification}, and glycerol reacting with acetone to form solketal and water, $
C_3H_8O_3 + C_3H_6O \rightleftharpoons C_6H_{12}O_3 + H_2O$
\cite{solketal}. 

Another instance of a reversible reaction is
\[
A+B \xrightleftharpoons{} C,
\]
which is modeled \cite{A+B-><-C}, using the mass-action law, by the reactions
\begin{equation}\label{eq: A+B-><- C reactions}
\begin{cases}
    f_1(a,b,c) = -m_1ab+m_2c,\\
    f_2(a,b,c)=-m_1ab+m_2c,\\
    f_3(a,b,c)= m_1ab-m_2c,
\end{cases}
\end{equation}
where $m_1,m_2>0$. Besides the total mass control $f_1+f_2+f_3\leq -m_2c$ (recall that $a,b,c$, modeling concentrations, can be assumed nonnegative), the system satisfies a \textit{triangular mass control} as well. This corresponds to the successive inequalities $f_1\leq m_2c$, $f_1+f_2\leq m_2c$, and $f_1+f_2+f_3\leq -m_2c$. For the general definition of the triangular  mass control structure, see \eqref{eq: TMC} below. Examples of chemical and biological reactions of the form $A+B\xrightleftharpoons{} C$ include the hydration of carbon dioxide $CO_2+H_2O \xrightleftharpoons{} H_2CO_3$ \cite{hydrationofcarbondioxide}, and reversible binding processes such as oxygen binding to myoglobin, $Mb+O_2 \xrightleftharpoons{} MbO_2$ \cite{ligandMb}. The \textit{triangular mass control} structure, which is the main object of interest in this paper, appears naturally in more complicated chemical reaction networks. A prominent example is the Michaelis-Menten mechanism for enzyme kinetics \cite{MichaelisMenten1,MichaelisMenten2} which can be symbolically represented as
\[
S+E\xrightleftharpoons{} C \rightarrow P+E
\]
and describes a substrate $S$
that reacts reversibly with an enzyme $E$ to form a complex $C$ which is
transformed irreversibly into a product $P$ and the enzyme $E$. The reactions describing this process are
\begin{equation}\label{eq: S+E-><-C->P+E}
\begin{cases}
f_1(s,e,c,p) &= -k_1 se + k_3 c, \\
f_2(s,e,c,p)&= -k_1se + (k_3 + k_2)c, \\
f_3(s,e,c,p)&= k_1se - (k_3 + k_2)c\\
f_4(s,e,c,p)&= k_2c,
\end{cases}
\end{equation}
where $k_1,k_2,k_3>0$, and $s,e,c,$ and $p$ are the molar concentrations of $S,E,C,$ and $P$ respectively. 

Deterministic reaction-diffusion systems whose reaction terms satisfy \eqref{eq: M}, or some similar form of mass control structure, have received thorough treatment in the past twenty years; for an indicative list, we refer to \cite{masscontrol1,masscontrol2,masscontrol3,A+B-><-C,masscontrol5,masscontrol6,masscontrol7,masscontrol8,masscontrol9,masscontrol10,masscontrol11,masscontrol13,pierre2000blowup,application1,application2,application3,application4,application5,exponentialreact,pierre2010global,bothe2010quasi}. The literature in the case when random perturbation is imposed on the system, however, is, to the best of our knowledge, rather scarce. Up to date, the most basic question regarding the  existence of solutions to the stochastically perturbed mass--controlled system \eqref{eq: initial mxr system} has remained open, with the only result in this direction to have been given recently by Agresti \cite{agresti2024delayed}. In his paper, Agresti showed that adding a transport noise that is the sum of Brownian motions paired with a divergence--free field to mass-controlled reaction diffusion systems satisfying \eqref{eq: M} can have a regularizing effect, in the sense that it can delay the blow-up of strong solutions arbitrarily far in time. Under an
additional strong assumption (that the total mass of the system decays exponentially fast), Agresti proved 
that the solution is global in time with high probability.

In this paper, we answer the question of global-in-time existence for \eqref{eq: initial mxr system} in the case where the noise $\dot{W}$ is space-time white when $d=1$, and white in time and colored in space when $d\geq 2$, and the reaction terms satisfy, among other conditions, the \textit{triangular mass control}, in the sense described for the sets of reactions \eqref{eq: A+B-><- C reactions} and \eqref{eq: S+E-><-C->P+E}; see \eqref{eq: TMC} below. We mention here that reactions of the form given by \eqref{eq: A+B-><-C+D reactions} cannot be treated yet by the methods employed in this paper. To prove global existence in the case of a triangular mass control, we will closely  follow the framework  Pierre established in the survey \cite{pierre2010global} that gathers basic global existence results for deterministic mass-controlled systems. Our way of manipulating the reaction terms of \eqref{eq: initial mxr system} to prove global existence is inspired, in particular, by Pierre's arguments.

 We shall see now how the basic assumptions satisfied by mass-controlled systems in Pierre's framework differ from those of Cerrai \cite{cerrai2003stochastic}. In Theorem 3.5. of \cite{pierre2010global}, Pierre first assumes the reactions to be \textit{quasipositive} in the sense that,
\begin{equation*}
    f_1(0,a_2,...,a_m)\geq 0,\quad f_2(a_1,0,a_3,...,a_m)\geq 0, \quad \cdots \quad f_m(a_1,...,a_{m-1},0)\geq 0.\tag{P}\label{eq: P}
\end{equation*}
In the deterministic setting, \eqref{eq: P} implies that the solutions remain nonnegative whenever the initial data are nonnegative. It is worth noting that Cerrai's assumptions \cite{cerrai2003stochastic} allowed also for negative solutions. Nevertheless, it is typically the case that solutions to reaction--diffusion systems with control of mass model concentrations (for instance, of chemicals), and therefore assuming them nonnegative is rather reasonable.

Next, Pierre assumes that the reactions have a structure of \textit{triangular mass control}, in the sense that
\begin{align*}
\begin{split}
    &f_{1}(a)\leq C_1(1+a_1+...+a_m),\\
    &f_1(a)+f_2(a)\leq C_2(1+a_1+...+a_m),\\
    &\vdots\\
    &f_1(a)+...+f_m(a)\leq C_m(1+a_1+...+a_m).
\end{split}
\tag{TMC}\label{eq: TMC}
\end{align*}
Notice that the last row of \eqref{eq: TMC} is actually \eqref{eq: M}. It is not at all apparent why one would require a stronger structure such as \eqref{eq: TMC} instead of relying on \eqref{eq: M} for obtaining global-in-time existence. In fact, a famous paper from the deterministic literature \cite{pierre2000blowup} demonstrates, by constructing an example, that assumptions \eqref{eq: P} and \eqref{eq: M} are not always enough to prevent the explosion of solutions. What Pierre shows in his survey \cite{pierre2010global} is that triangular mass control \eqref{eq: TMC} serves as one sufficient condition that guarantees, in the deterministic setting, that solutions do not blow up in finite time.

When $a_1,...,a_m$ are nonnegative, Cerrai's assumptions \cite{cerrai2003stochastic} trivially imply \eqref{eq: TMC} as the interaction terms Cerrai considers are assumed to grow linearly. However, Cerrai's assumptions do not allow for nonlinear interaction terms that may cancel each other when added row-wise as can very often happen in \eqref{eq: TMC}. A simple instance of such reaction terms is
\begin{equation*}\label{eq: prototypical example}
\begin{cases}
    f_1(u_1,u_2)= -u_1u_2,\\
    f_2(u_1,u_2)= u_1u_2.
\end{cases}
    \end{equation*}
Other examples that are precluded from Cerrai's assumptions are the one describing the chemical reactions \eqref{eq: A+B-><- C reactions} and \eqref{eq: S+E-><-C->P+E}. More examples of reactions satisfying a triangular mass control structure with highly nonlinear, coupled interactions are presented in Section \ref{section: assumptions}. 

Finally, along with \eqref{eq: P} and \eqref{eq: TMC}, Pierre also imposes a restriction on the growth of the reactions, namely that they grow polynomially, an assumption that we adopt as well. This condition was also used by Cerrai \cite{cerrai2003stochastic}. It is expected that this condition can be relaxed significantly. We refer, for instance, to the paper \cite{exponentialreact} in which global existence is established for the deterministic system described by the reactions $f_1(u_1,u_2) = -u_1(1+u_2)e^{u_2^2}$ and $ f_2(u_1,u_2) = u_1e^{u_2^2}$ which grow faster than exponentially. The complete list of assumptions we impose on $f_i$, $\sigma_{ik}$ and the noise $\dot{W}$ can be found in Section \ref{section: assumptions}.

As we mentioned earlier, the goal of our paper is to prove the global existence of mild solutions for the mass-controlled, stochastically perturbed system \eqref{eq: initial mxr system} under Assumptions \ref{assump: initial data are >= 0 and in L^oo}--\ref{assump: Integrability of kernel L} on $u_0,f$, $\sigma$, and the noise $\dot{W}$. We will now make our goal rigorous. The mild solution to \eqref{eq: initial mxr system} with initial data $u_0 = (u_{10},...,u_{m0})$ is defined to be the solution $u=(u_1,..,u_m)$ of the integral equation
\begin{align}\label{eq: definition mild solution}
\begin{split}
    u_i(t,x) = \int_D G_i(t,x,y)u_0(y)dy &+ \int_0^t\int_D G_i(t-s,x,y)f_i(u(s,y))dyds\\
    &+\sum_{k=1}^r\int_0^t\int_D G_i(t-s,x,y)\sigma_{ik}(u(s,y))W_k(dyds),
\end{split}
\end{align}
where $G_i(t,x,y)$ is the heat kernel associated with the operator $d_i\Delta$ on $D$ with either Dirichlet or Neumann boundary conditions.

By \textit{global} existence of \eqref{eq: definition mild solution} we mean the following. Let us assume for a moment that $f$ and $\sigma$ are locally Lipschitz continuous in accordance with the later assumptions \ref{assump: locally Lipschitz f} and \ref{assump: Locally Lipschitz sigma} and that $\dot{W}$ is a Gaussian noise, white in time and colored in space (see Definition \ref{def: space time noise}). For any $n\in \mathbb N$, we may define localized functions $f_n$ and $\sigma_n$ such that for any $a\in \mathbb R^m$, $f_n(a)$ and $\sigma_n(a)$ match $f$ and $\sigma$ respectively when $\max\limits \{|a_1|,...,|a_m|\}\leq n$ and are globally Lipschitz continuous and bounded when $\max \{|a_1|,...,|a_m|\}>n$. There are many ways of realizing such a construction. One example is
\[
f_n(a) = \begin{cases}
    f(a),&\max\{|a_1|,...,|a_m|\}\leq n\\
    f\left(\frac{na}{\max\{|a_1|,...,|a_n|\}}\right), &\max\{|a_1|,...,|a_m|\}> n
\end{cases},
\]
and
\[
\sigma_n(a) = \begin{cases}
    \sigma(a),&\max\{|a_1|,...,|a_m|\}\leq n\\
    \sigma\left(\frac{na}{\max\{|a_1|,...,|a_n|\}}\right), &\max\{|a_1|,...,|a_m|\}> n
\end{cases},
\]
which Cerrai used in \cite{cerrai2003stochastic}. By construction, $f_n(a)$ and $\sigma_n(a)$ are globally Lipschitz. Therefore, by standard Picard iteration arguments (see, for instance, \cite{da2014stochastic} and \cite{dalang2026stochastic}) there exists a unique \textit{global} localized mild solution $u_n(t,x) = (u_{1,n}(t,x),...,u_{m,n}(t,x))$ to the problem
\begin{align}\label{eq: truncated global solution mxr}
\begin{split}
    u_{i,n}(t,x) = \int_D G_i(t,x,y)u_0(y)dy &+ \int_0^t\int_D G_i(t-s,x,y)f_{i,n}(u_n(s,y))dyds\\
    &+\sum_{k=1}^r\int_0^t\int_D G_i(t-s,x,y)\sigma_{ik,n}(u_n(s,y))W_k(dyds),
\end{split}
\end{align}
namely, $u_n(t,x)$ is a solution of this problem for all times and $x\in D$. In fact, these $u_n(t,x)$ match with each other in the sense that for $m>n$,
\[
u_n(t,x) = u_m(t,x) \text{ for } x\in D,  0\leq t\leq \tau_n := \inf\Big \{t>0: \sup\limits_{x\in D}\sup\limits_{i=1,...,m} |u_{i,n}(t,x)| \geq n\Big\}.
\]
This observation allows us to define the unique \textit{local} mild solution of \eqref{eq: initial mxr system} to be 
\[
u(t,x) := u_n(t,x) \text{ for all } x\in D \text{ and } t\in [0,\tau_n].
\]
We then consider the \textit{existence time of the local solution}
\[
\tau_{\infty}:= \sup_{n\in \mathbb N} \tau_n.
\]
This object is well defined since the sequence $\{\tau_n\}_{n\in \mathbb N}$ is non decreasing. We say that the local mild solution is a \textit{global} mild solution if it never explodes with probability one,  namely $\mathbb P(\tau_{\infty} = \infty) = 1$.

Proving $\mathbb P(\tau_{\infty} = \infty) = 1$ is the aim of this paper. We comment here briefly on the intricacy of this task. Standard results (see, for instance, Theorem 4.5.5. in \cite{dalang2026stochastic}) assert that the global solution $u_n$ of \eqref{eq: truncated global solution mxr} satisfies the $p$th moment bound
\begin{equation}\label{eq: pth moment bound}
\mathbb E \sup_{t\in [0,T]}\sup_{x\in D}\sup_{i=1,...,m} |u_{i,n}(t,x)|^p \leq C_{n,p,T},
\end{equation}
where the constant $C_{n,p,T}$ may depend on the Lipschitz constants of $f_n$ and $\sigma_n$. In Theorem \ref{thm: Theorem 2xr} (for the $2\times r$ system) and Theorem \ref{thm: Theorem mxr} (for the general $m\times r$ system) we will show that under our assumptions, these moment bounds do not depend on the Lipschitz constants of $f_n$ and $\sigma_n$ and that the moment bounds are in fact uniform with respect to $n$,
\begin{equation}\label{eq: uniform bound}
\sup_{n\in \mathbb N} \mathbb E \sup_{t\in [0,T]}\sup_{x\in D}\sup_{i=1,...,m} |u_{i,n}(t,x)|^p < \infty.
\end{equation}
As soon as the uniform in $n$ estimate \eqref{eq: uniform bound} is established, the proof that $\mathbb P(\tau_{\infty} = \infty)=1$ is a straightforward consequence of Markov's inequality, since for any fixed time horizon $T>0$,
\[
\mathbb P\left( \tau_n\leq T\right) =\mathbb P\left( \sup\limits_{t\in [0,T]}\sup_{x\in D}\sup\limits_{i=1,...,m} u_{i,n}(t,x)\geq n\right) \leq \frac{\mathbb E\sup\limits_{t\in [0,T]}\sup\limits_{x\in D}\sup\limits_{i=1,...,m} |u_{i,n}(t,x)|^p}{n^p} \xlongrightarrow[n\rightarrow \infty]{} 0.
\]

In order to prove \eqref{eq: uniform bound}, we take advantage of the triangular structure \eqref{eq: TMC} of the reactions $f$. This handling of the reactions is novel in the sense that all currently existing arguments proving the global existence of stochastic reaction--diffusion systems rely on strong dissipativity of the reactions to construct a contraction mapping and conclude existence via a fixed point argument. What we demonstrate is that even in the absence of strong dissipativity, we can use the structural interplay between the reactions to close the proof without invoking a fixed point argument.

So, with the aim of  \eqref{eq: uniform bound} in mind we have structured the paper as follows. In Section \ref{section: assumptions} we state the assumptions of our problem. In Section \ref{section: preliminary results} we collect the auxiliary results that will assist us throughout the proofs of Section \ref{section: Main results}. Finally, in Section \ref{section: Main results} we prove \eqref{eq: uniform bound} (and thus global existence) first for the $2\times r$ system and then for the initial $m\times r$ system by virtue of induction. We mention here for the convenience of the reader that \eqref{eq: truncated global solution mxr} will be the main object of interest throughout the paper and will be referred to frequently.
\section{Notational remarks}
In what follows, we set $Q_t := (0,t)\times D$, where $D\subset \mathbb R^d$ is an open and bounded domain with sufficiently smooth boundary. By $||\cdot||_2$ we mean the Euclidean norm of vectors. With $||\cdot||_{L^{\infty}(D)}$ we denote the usual supremum norm
\[
||v||_{L^{\infty}(D)} = \operatorname*{ess\,sup}_{x \in D} |v(x)|.
\]
For any $p\geq 1$, we will use the $L^p$ norms
\[
||g||_{L^p(Q_t)} := \Bigg (\int_0^t\int_D |g(s,x)|^pdxds\Bigg)^{1/p},\quad ||g(s)||_{L^p(D)} := \Bigg(\int_D |g(s,x)|^pdx\Bigg)^{1/p},
\]
where $s > 0$ is a time variable. If $\varepsilon \in (0,1)$ and $D\subset \mathbb R^d$, we denote by $||\cdot||_{W^{\varepsilon,p}(D)}$ the fractional Sobolev norm in $W^{\varepsilon,p}(D;\mathbb R)$,
\[
||v||_{W^{\varepsilon,p}(D)} := \Bigg(\int_D |v(x)|^p dx + \int_D\int_D \frac{|v(x)-v(y)|^p}{||x-y||_2^{d+\varepsilon p}}dxdy\Bigg)^{\frac{1}{p}}.
\]
Finally, for any $\theta\in (0,1)$, we denote by $C^{\theta}(\overline{D})$ the subspace of $\theta$-H\"older functions endowed with the norm
\[
\|v\|_{C^\theta(\overline D)}
:=
\|v\|_{L^\infty(D)} + \sup_{\substack{x,y \in \overline D \\ x \neq y}}
\frac{|v(x)-v(y)|}{||x-y||_2^\theta}.
\]
\section{Assumptions}\label{section: assumptions}
We make the following assumption on the initial data.
\begin{assumption}\label{assump: initial data are >= 0 and in L^oo}
    For each $i\in \{1,...,m\}$, $u_{i0}(x)\in L^{\infty}(D)$ and $u_{i0}(x)\geq 0$.
\end{assumption}
\subsection{Assumptions on $f$}
We assume the following for $f(a) = \bigl(f_1(a),...,f_m(a)\bigr) :\mathbb R^m_{+} \rightarrow \mathbb R^m$.
\begin{assumption}[Locally Lipschitz]\label{assump: locally Lipschitz f}
    For every $R>0$ there exists $L_R>0$ such that, whenever $||x||_2,||y||_2\leq R$,
\[
|f_i(x)-f_i(y)|\leq L_{R}||x-y||_2,
\]
for all $i\in \{1,...,m\}$.
\end{assumption}
\begin{assumption}[Quasipositivity]\label{assump: Positivity f} For any $a_1,...,a_m\geq 0,$
    \begin{equation*}
     f_i(a_1,...,a_{i-1},0,a_{i+1},...,a_m) \geq 0,
\end{equation*}
for all $i\in \{1,...,m\}$.
\end{assumption}
\begin{assumption}[Triangular mass control]\label{assump: triangular mass control f}
    For any $a_1,...,a_m\geq 0$ there exist constants $C_1,...,C_m\in \mathbb R$ such that, if $a=(a_1,...,a_m)$ then
\begin{align}\label{eq: triangular structure}
\begin{split}
    &f_{1}(a)\leq C_1(1+a_1+...+a_m),\\
    &f_1(a)+f_2(a)\leq C_2(1+a_1+...+a_m),\\
    &\vdots\\
    &f_1(a)+...+f_m(a)\leq C_m(1+a_1+...+a_m).
\end{split}
\end{align}
\begin{remark}
    Let $P$ be the following $m\times m$, lower triangular matrix
    \[
P=\begin{pmatrix}
1 & 0 & 0 & \cdots & 0 \\
1 & 1 & 0 & \cdots & 0 \\
1 & 1 & 1 & \cdots & 0 \\
\vdots & \vdots & \vdots & \ddots & \vdots \\
1 & 1 & 1 & \cdots & 1
\end{pmatrix}.
\]
Then \eqref{eq: triangular structure} can be written as
\begin{equation}\label{eq: triang structure matrix form}
Pf(a)\leq \mathbf{C}\left(1+\sum_{i=1}^m a_i\right),
\end{equation}
where $\mathbf{C} = (C_1,...,C_m)$. In that way, we may generalize \eqref{eq: triangular structure} by replacing $P$ by any lower triangular invertible $m\times m$ matrix with nonnegative entries. For the sake of simplicity, we will use condition \eqref{eq: triangular structure} throughout the paper, although our arguments go through naturally for the more general case of \eqref{eq: triang structure matrix form} as well.
\end{remark}
\end{assumption}
\begin{assumption}[Polynomial growth]\label{assump: Polynomial growth f}
     There exists $C>0$ and $\mu>0$ such that for any $a = (a_1,...,a_m)\in [0,\infty)^m$ and $i\in \{1,...,m\}$,
\begin{equation}\label{f Polynomial Growth 2x2}
|f_i(a)| \leq C(1+a_1^\mu+...+a_m^\mu).
\end{equation}
\end{assumption}
\subsubsection{Examples of reactions $f$.}\label{subsubsec: Examples f} Following are some more examples of reactions our paper treats that lie outside the framework
of \cite{cerrai2003stochastic} and \cite{salins2021systemsLDP}. For the deterministic system, Assumption \ref{assump: Positivity f} guarantees that $u_1$ and $u_2$ remain nonnegative for all times and $x\in D$ (see \cite{pierre2010global}). Under stochastic perturbation, nonnegativity of solutions holds under additional structural assumptions on the noise coefficient. This is the subject of Lemma \ref{lemma: non negativity of solutions}. To showcase the following examples, we assume that $u_i$ are nonnegative.
\begin{enumerate}
    \item The following reactions correspond to the well-known  \textit{Brussellator system}, which models chemical morphogenic processes \cite{bruss1},\cite{bruss2}:
    \begin{align*}
        f_1(u_1,u_2) &= -u_1u_2^2+\beta u_2,\\
        f_2(u_1,u_2) &= u_1u_2^2-(\beta+1)u_2+\alpha,
    \end{align*}
    were $\alpha,\beta >0$. Then, $f_1(0,u_2) = \beta u_2\geq 0$, $f_2(u_1,0) = \alpha \geq 0$, verifying Assumption  \ref{assump: Positivity f}, and $f_1(u_1,u_2)+f_2(u_1,u_2) \leq -u_2+\alpha$, verifying Assumption \ref{assump: triangular mass control f}. For the polynomial growth assumption we use $ab\leq (a^2+b^2)/2$. Then, $|f_1(u_1,u_2)|\leq c_1(u_2+u_1^2+u_2^4)$ and $|f_2(u_1,u_2)|\leq c_2(1+u_1^2+u_2^4)$, verifying Assumption \ref{assump: Polynomial growth f}.
    \item The following reactions provide a model for diffusive calcium dynamics \cite{application1}:
    \begin{align*}
f_1(u_1,u_2,u_3,u_4) &=
(1 + u_4)(1 - u_1)
- \frac{u_1^4}{1 + u_1^4}, \\[0.5em]
f_2(u_1,u_2,u_3,u_4) &= 
- u_2 + u_3, \\[0.5em]
f_3(u_1,u_2,u_3,u_4) &=
- (1 + u_1) u_3
+ u_2 + u_4, \\[0.5em]
f_4(u_1,u_2,u_3,u_4) &=u_1 u_3 + u_5
- (1+u_1) u_4, \\[0.5em]
f_5(u_1,u_2,u_3,u_4)&=
u_1 u_4 -  u_5.
\end{align*}
\end{enumerate}
Examples of such physical systems abound. We refer the curious reader to Pierre's survey \cite{pierre2010global} and the references therein for a more comprehensive list of such models. Quite recently,  reactions satisfying very similar assumptions (but very different boundary conditions) were used to model, interestingly enough, the early-stage spatial spread of Amyloid-$\beta$ oligomers in Alzheimer's disease \cite{alzheimers}.
\subsection{Assumptions on $\sigma$}
We assume the following for $\sigma(a) = [\sigma_{ik}(a)]_{i\in \{1,...,m\},k\in \{1,...,r\}}: \mathbb R^{m}_{+}\rightarrow \mathbb R^{m\times r}$.
\begin{assumption}[Locally Lipschitz]\label{assump: Locally Lipschitz sigma} For every $R>0$ there exists $L_\sigma>0$ such that, whenever $||x||_2,||y||_2\leq R$,
\[
|\sigma_{ik}(x) - \sigma_{ik}(y)|\leq L_R ||x-y||_2,
\]
for any $i\in\{1,...,m\}$ and $k\in \{1,...,r\}$.
\end{assumption}
\begin{assumption}[Positivity]\label{assump: Positivity sigma} For any $k\in \{1,...,r\}$ and $a_1,...,a_m\geq 0$,
\[
\sigma_{ik}(a_1,...,a_{i-1},0,a_{i+1},...,a_m) = 0,
\]
for any $i\in \{1,...,m\}$.
\end{assumption}
\begin{assumption}[Linear Growth]\label{assump: Linear growth sigma} For any $i\in \{1,...,m\}$,  $k\in \{1,...,r\}$ and $a = (a_1,...,a_m)\in [0,\infty)^m$,
\[
|\sigma_{ik}(a)| \leq C(1+a_1+...+a_m).
\]
\end{assumption}
\subsection{Assumptions on the noise $\dot{W}$}
We first provide the definition of a Gaussian space-time noise that is white in time and colored in space.
\begin{definition}\label{def: space time noise}
    Let $L: D\times D\rightarrow \mathbb R$ be a symmetric, positive--definite generalized function. A white-in-time, colored-in-space Gaussian noise with spatial covariance kernel $L$ is a centered Gaussian random measure on $[0,T]\times D$ such that for all adapted test functions $\varphi,\psi\in C^{\infty}_c([0,T]\times D)$,
\begin{align*}\label{eq: space time noise}
&\mathbb{E}
\left(\int_0^T\int_D \varphi(s,x)\,W(dxds)\right)
\left(\int_0^T\int_D \psi(s,y)\,W(dyds)\right)
\\
=&\mathbb E\int_0^T\int_D\int_D
\varphi(s,x)\,\psi(s,y)L(x,y)dxdyds.
\end{align*}
\end{definition}
\noindent 
In our paper, we care about the case where $W=(W_1,...,W_r)$ is a vector of white-in-time, colored-in-space Gaussian noises whose components satisfy Definition \ref{def: space time noise} with spatial covariance kernels $L_1(x,y),...,L_r(x,y)$ that adhere to the following assumptions:
\begin{assumption}[Positivity]\label{assump: Posivitiy of kernel L}
    For any $k\in \{1,...,r\}$, $L_k$ is symmetric, positive definite, and pointwise nonnegative in $D\times D$.
\end{assumption} 
\begin{assumption}[Heat kernel singularity]\label{assump: heat kernel singularity}
    For any $i\in \{1,...,m\}$, there exists a $C>0$ such that 
   \begin{equation}\label{eq: heat kernel singularity}
       \sup_{x\in D}\sup_{y\in D} G_i(t,x,y) \leq Ct^{-\frac{d}{2}},
   \end{equation}
   where $d\geq 1$ is the spatial dimension.
\end{assumption}
\begin{assumption}[Heat kernel convolution singularity]\label{assump: convolution singularity}
    For any $i\in \{1,...,m\}$ and $k\in \{1,...,r\}$, there exist constants $C>0$ and $\eta\in(0,1)$ such that for all $t\in(0,T]$,
\begin{equation}\label{eq:L2}
\sup_{x\in D}\int_D\int_D G_i(t,x,y_1)G_i(t,x,y_2)L_k(y_1,y_2)\,dy_1dy_2
\le C t^{-\eta}.
\end{equation}
\end{assumption}
\begin{assumption}[Integrability]\label{assump: Integrability of kernel L} For any $k\in \{1,...,r\}$, the kernels $L_k: D\times D\rightarrow \mathbb R$ are integrable in the sense that
    \[
    \sup_{y_1\in D}\int_{D} L_k(y_1,y_2)dy_2<\infty.
    \]
\end{assumption}
\noindent 
These assumptions are satisfied, for instance, by the Riesz kernel,
\[
L(x,y) = \frac{1}{||x-y||_2^{\beta}},\quad x,y\in D, \quad 0<\beta<d,
\]
and also by spectral kernels of the form
\[
L(x,y) = \int_0^{\infty} s^{\gamma - 1}e^{-\theta s}G(s,x,y)ds,
\]
for any $\theta > 0$ and for $\gamma < d/2$. As the authors in \cite{salins2025nonexplosion} point out, spectral kernels of this form can always be rewritten equivalently as
\[
L(x,y) = \sum_{j=1}^{\infty} \lambda_j^2e_j(x)e_j(y),
\]
Such spectral kernels were considered, for instance, by Da Prato and  Zabczyk in \cite{da2014stochastic}, and Cerrai in \cite{cerrai2003stochastic} and \cite{cerrai2009khasminskii}. For the verification of Assumptions \ref{assump: Posivitiy of kernel L}-\ref{assump: Integrability of kernel L} we refer to the Examples section of \cite{salins2025nonexplosion}. We note here that our Assumption \ref{assump: Integrability of kernel L} is slightly stronger than Assumption 3(B) of \cite{salins2025nonexplosion}, which requires $L\in L^1(D\times D)$.
\section{Auxiliary results}\label{section: preliminary results}
We gather here several results that are crucial for the proofs of Theorems \ref{thm: Theorem 2xr} and \ref{thm: Theorem mxr} in the next section. We begin with a Lemma about the nonnegativity of solutions to systems of equations. This lemma is a natural extension of Corollary 2.6. by Kotelenez \cite{kotelenez1992comparison}.
\begin{lemma}\label{lemma: non negativity of solutions}
Fix $i\in \{1,...,m\}$ and $k\in \{1,...,r\}$. Let $b_i:\mathbb R_{+}^m\to \mathbb R$ and $g_{ik}: \mathbb R^m_{+}\to \mathbb R$. Assume that:
    \begin{enumerate}
        \item $u_i$ solves the integral equation
        \begin{align*}
        u_i(t,x) &= \int_D G_i(t,x,y)u_{i0}(y)+\int_0^t\int_D G_i(t-s,x,y)b_i(u(s,y))dyds\\
    &+ \sum_{k=1}^r \int_0^t\int_D G_i(t-s,x,y)g_{ik}(u(s,y))W_k(dyds),
        \end{align*}
        where $G_i(t,x,y)$ is the heat kernel;
        \item The functions $b_i$ and $g_{ik}$ are globally Lipschitz continuous: There exists an $L>0$ such that, whenever $x,y\in [0,\infty)^m$,
        \begin{align*}
            &|b_i(x)-b_i(y)|\leq L||x-y||_2\\
            &|g_{ik}(x)-g_{ik}(y)|\leq L||x-y||_2
        \end{align*}
        \item The functions $b_i$ and $g_{ik}$ are uniformly bounded: There exists a $K>0$ such that
        \begin{align*}
            &\sup_{\substack{a_1,...,a_m\\ a_i\geq 0}} |b_i(a_1,...,a_m)|\leq K,\\
            &\sup_{\substack{a_1,...,a_m\\ a_i\geq 0}} |g_{ik}(a_1,...,a_m)|\leq K.
        \end{align*}
        \item The functions $b_i$ and $g_{ik}$ satisfy the positivity conditions
        \begin{align*}
        b_i(a_1,...,a_{i-1},0,a_{i+1},...,a_m) &\geq 0;\\
        g_{ik}(a_1,...,a_{i-1},0,a_{i+1},...,a_m) &= 0, \quad \text{ for any } k\in \{1,...,r\},
        \end{align*}
        whenever $a_1,...,a_m\geq 0$.
        \item The initial data $u_{i0}(y)$ are nonnegative a.e.
    \end{enumerate}
    Then $\mathbb{P}\bigl(u_i(t,\cdot)\ge 0 \text{ a.e. in } D \text{ for all } t\in[0,T]\bigr)=1$.
\end{lemma}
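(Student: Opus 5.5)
The plan is to reduce the system to a family of scalar equations and then appeal to the scalar comparison/positivity result of Kotelenez (Corollary 2.6 in \cite{kotelenez1992comparison}), after first making sense of the coefficients on all of $\mathbb R^m$.

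\emph{Extension of the coefficients.} The functions $b_i,g_{ik}$ are only given on $\mathbb R^m_+$, but the solution $u$ may a priori take negative values. So I first extend them to all of $\mathbb R^m$ by composing with the projection $\pi(a)=(a_1^+,\dots,a_m^+)$. Since $\pi$ is $1$-Lipschitz, the extended coefficients remain globally Lipschitz and bounded. They also keep the positivity properties on the boundary: $b_i(a)\ge 0$ and $g_{ik}(a)=0$ whenever $a_i\le 0$. I interpret the integral equation with these extended coefficients; on the event where the solution is nonnegative, nothing changes.

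\emph{Freezing the other components.} Fix $i$ and regard $u_j$, $j\ne i$, as given adapted random fields. Write
\[
\tilde b(t,x,v):=b_i(u_1(t,x),\dots,v,\dots,u_m(t,x)),\qquad \tilde g_k(t,x,v):=g_{ik}(u_1(t,x),\dots,v,\dots,u_m(t,x)),
\]
with $v$ inserted in the $i$th slot. Then $u_i$ solves a scalar SPDE with random, adapted, predictable coefficients. These coefficients are uniformly Lipschitz and bounded in $v$, satisfy $\tilde g_k(t,x,0)=0$ and $\tilde b(t,x,0)\ge 0$, and the equation has nonnegative initial data.

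\emph{Comparison argument.} I compare $u_i$ with the solution of the linear equation with zero drift and noise coefficient $\tilde g_k(t,x,v)$, whose unique solution with zero initial data is $v\equiv 0$. Kotelenez's comparison theorem, which allows random adapted coefficients as long as Lipschitz and measurability conditions hold uniformly, then gives $u_i\ge 0$ a.s.

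If one prefers a self-contained argument instead, I would proceed as follows. Mollify or approximate so that an Itô formula applies, via a Galerkin/Yosida approximation $u_i^\lambda=\lambda(\lambda-d_i\Delta)^{-1}u_i$. Apply Itô's formula to $\int_D \phi_\varepsilon(u_i^\lambda)\,dx$, where $\phi_\varepsilon$ is a smooth convex approximation of $(v^-)^2$ supported in $v\le 0$. Three terms need control:
\begin{itemize}
\item The diffusion term is nonpositive by convexity, with either boundary condition.
\item The drift term is $\le C\int (u_i^-)^2$, because $\tilde b(v)\ge \tilde b(v)-\tilde b(0)\ge -L|v|$ on $\{v<0\}$.
\item The quadratic variation term is bounded by $\int\!\int \phi_\varepsilon''\,|\tilde g(u_i)(y_1)||\tilde g(u_i)(y_2)|L_k(y_1,y_2)$. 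Using $|\tilde g_k(v)|\le L|v|$, this is restricted to $v<0$.
\end{itemize}
Taking expectations kills the martingale, and Gronwall yields $\mathbb E\|u_i^-(t)\|_{L^2}^2=0$. Continuity of the paths then upgrades this to a statement holding for all $t$ simultaneously.

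\emph{Main obstacle.} The hardest step is the quadratic variation term with colored noise. Here I use Assumption \ref{assump: Integrability of kernel L} together with Schur's test:
\[
\int\!\!\int |h(y_1)||h(y_2)|L_k\,dy_1dy_2\le \sup_{y_1}\int L_k(y_1,\cdot)\,\|h\|_{L^2}^2 .
\]
With $h=\sqrt{\phi_\varepsilon''}\,|u_i|\mathbf 1_{u_i<0}$-type factors, this gives the bound $C\int (u_i^-)^2$. The second delicate point is the regularization needed to justify Itô's formula and then passing $\lambda\to\infty$. I would handle it via the standard convergence $u_i^\lambda\to u_i$ in $L^2(\Omega\times Q_T)$.
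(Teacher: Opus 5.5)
Your main route is the paper's argument. You freeze the components $u_j$, $j\neq i$, as adapted random fields, reduce to a scalar equation for each $u_i$, and invoke Kotelenez's result. Your explicit extension $b_i\circ\pi$, $g_{ik}\circ\pi$ is what makes the freezing step consistent: the paper's frozen arguments $u_j\wedge 0$ only make sense read as $u_j\vee 0$ together with such an extension.

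One detail in your comparison step needs fixing. A standard comparison theorem against the zero-drift equation requires the drift ordering $0\le \tilde b(t,x,v)$ for \emph{all} $v$. You only have this for $v\le 0$. Compare instead with the drift $\tilde b(t,x,v)\wedge 0$. It is Lipschitz, lies below $\tilde b$, and vanishes at $v=0$, so $0$ still solves the comparison equation. Alternatively, use the positivity corollary directly, as the paper does.

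The self-contained It\^o fallback is optional, so it does not affect the main argument. As written, though, it fails at exactly the step you call the main obstacle.

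\emph{The correction term is on the diagonal.} The It\^o correction for $\int_D\phi_\varepsilon(u)\,dx$ is a trace, namely $\frac12\sum_k\int_D\phi_\varepsilon''(u(x))\,\tilde g_k(u(x))^2\,L_k(x,x)\,dx$. The off-diagonal double integral over $L_k(y_1,y_2)$ that you bound by Schur's test is a different object. With $\phi_\varepsilon'$ in place of $\phi_\varepsilon''$, it is the quadratic variation of the martingale term, where Schur's test is appropriate.

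\emph{Why this matters here.} For admissible kernels such as the Riesz kernel, $L_k(x,x)=\infty$. The noise is then not trace class, $u_i$ is not an $L^2(D)$-valued semimartingale, and $\sup_{y_1}\int L_k(y_1,\cdot)$ does not control the correction.

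\emph{The Yosida step does not rescue it.} At level $\lambda$ the correction is
\[
\tfrac12\sum_k\int_D\phi_\varepsilon''(u^\lambda(x))\int_D\!\int_D R_\lambda(x,y_1)R_\lambda(x,y_2)\,\tilde g_k(u(y_1))\,\tilde g_k(u(y_2))\,L_k(y_1,y_2)\,dy_1dy_2\,dx ,
\]
where $R_\lambda$ is the kernel of $\lambda(\lambda-d_i\Delta)^{-1}$. Here $\phi_\varepsilon''(u^\lambda(x))$ and $\tilde g_k(u(y))$ are evaluated at different functions and points, so they do not cancel. Schur's test then produces the factor $\int_D R_\lambda(x,y)^2\,dy$, which blows up as $\lambda\to\infty$.

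\emph{A repair.} Use what your extension gives: $\tilde g_k(t,x,v)=0$ for all $v\le 0$. Take $\phi_\varepsilon$ vanishing on $[0,\infty)$, so that $\phi_\varepsilon''(v)\,\tilde g_k(t,x,v)^2\equiv 0$. Then mollify the noise in space, which by the integrability assumption on $L_k$ gives a covariance bounded on the diagonal. For the mollified equation It\^o's formula is available and the correction vanishes identically. The drift term $\phi_\varepsilon'(u)\tilde b(u)$ and the diffusion term are both nonpositive, which gives positivity with no Gronwall step. Finally, pass to the limit using stability of the mild solution with respect to the covariance.
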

\begin{proof}
    We define the new  functions 
    \begin{align*}
    \tilde{b}_1(t,x,z) &:= b_1(z,u_2(t,x)\wedge 0,...,u_m(t,x)\wedge 0),\\
    \tilde{g}_1(t,x,z) &:= g_1(z,u_2(t,x)\wedge 0,...,u_m(t,x)\wedge 0),
    \end{align*}
     Then $u_1(t,x)$ solves the decoupled integral equation
    \begin{align*}
        u_1(t,x) &= \int_D G_1(t,x,y)u_{10}(y)+\int_0^t\int_D G_1(t-s,x,y)\tilde{b}_1(s,y,u_1(s,y))dyds\\
    &+ \sum_{k=1}^r \int_0^t\int_D G_1(t-s,x,y)\tilde{g}_{1k}(s,y,u_1(s,y))W_k(dyds),
        \end{align*}
 for which the assumptions of Corollary 2.6 by Kotelenez \cite{kotelenez1992comparison} are satisfied. Therefore, $u_1(t,x)\geq 0$ for all $t\in [0,T]$ and for almost all $x\in D$. By defining
 \begin{align*}
     \tilde{b}_i(t,x,z) &:= b_i(u_1(t,x)\wedge 0,...,u_{i-1}(t,x)\wedge 0,z,u_{i+1} (t,x)\wedge 0,...,u_m(t,x)\wedge 0),\\
     \tilde{g}_{ik}(t,x,z) &:= g_{ik}(u_1(t,x)\wedge 0,...,u_{i-1}(t,x)\wedge 0,z,u_{i+1}(t,x)\wedge 0,...,u_m(t,x)\wedge 0),
 \end{align*}
 an identical argument yields the non negativity of $u_2,...,u_m$ 
\end{proof}
Consequently, since by construction $f_{i,n}$ and $\sigma_{ik,n}$ satisfy the assumptions of Lemma \ref{lemma: non negativity of solutions}, we obtain the nonnegativity of $u_{i,n}$ as defined in \eqref{eq: truncated global solution mxr}.
\begin{corollary}\label{corollary: nonnegativity of u_i,n}
    We have that $u_{i,n}(t,x)\geq 0$ almost surely for all $t\in [0,T]$ and almost all $x\in D$.
\end{corollary}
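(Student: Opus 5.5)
The corollary follows by applying Lemma \ref{lemma: non negativity of solutions} to the localized system \eqref{eq: truncated global solution mxr}, with $b_i := f_{i,n}$ and $g_{ik} := \sigma_{ik,n}$. Hypothesis 1 of the lemma is just the statement that $u_n$ is the mild solution of \eqref{eq: truncated global solution mxr}, which exists globally by the Picard iteration argument recalled in the introduction. Hypothesis 5 is Assumption \ref{assump: initial data are >= 0 and in L^oo}. The remaining work is to check Hypotheses 2--4 for the truncated coefficients.

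\textbf{Lipschitz continuity and boundedness (Hypotheses 2--3).} Write $\pi_n(a) := na/\max\{|a_1|,\dots,|a_m|\}$ when $\max_j|a_j|>n$, and $\pi_n(a):=a$ otherwise. This is the radial retraction onto the $\ell^\infty$ ball of radius $n$. It is globally Lipschitz, with a constant depending only on $m$, and it takes values in the compact set $\{\max_j|a_j|\le n\}$. Since $f_{i,n}=f_i\circ\pi_n$, $f_{i,n}$ is globally Lipschitz by Assumption \ref{assump: locally Lipschitz f} applied with $R=\sqrt m\, n$. It is also bounded by $\sup_{\max_j|a_j|\le n}|f_i(a)|$, which is finite by continuity or by Assumption \ref{assump: Polynomial growth f}. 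The same argument applies to $\sigma_{ik,n}$, using Assumptions \ref{assump: Locally Lipschitz sigma} and \ref{assump: Linear growth sigma}.

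\textbf{Positivity at the boundary $a_i=0$ (Hypothesis 4).} The key observation is that $\pi_n$ preserves both the vanishing of a coordinate and the sign of each coordinate, because it multiplies $a$ by a positive scalar. So if $a_j\ge 0$ for all $j$ and $a_i=0$, then $\pi_n(a)$ also has nonnegative entries and $i$th entry zero. Assumption \ref{assump: Positivity f} then gives $f_{i,n}(a)=f_i(\pi_n(a))\ge 0$, and Assumption \ref{assump: Positivity sigma} gives $\sigma_{ik,n}(a)=0$.

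\textbf{Domain issue.} This is the main, though minor, obstacle. The lemma's proof freezes the other components through the operation $u_j\wedge 0$, so $b_i$ and $g_{ik}$ may be evaluated at points that are not in $\mathbb R^m_+$. Since $f$ and $\sigma$ are only given on $\mathbb R^m_+$, I would first extend them to all of $\mathbb R^m$ by composing with the coordinatewise positive part $a\mapsto(a_1^+,\dots,a_m^+)$. This map is $1$-Lipschitz and preserves $a_i=0$, so all the properties above persist for the extended coefficients. The extension changes nothing where the solution is nonnegative, which is what the lemma then yields. With the hypotheses verified, the lemma gives that almost surely $u_{i,n}(t,\cdot)\ge0$ a.e.\ in $D$ for all $t\in[0,T]$, for each $i$. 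This is the claim.
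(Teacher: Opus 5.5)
Your proposal is correct and follows the paper's own route. The paper also obtains the corollary by applying Lemma~\ref{lemma: non negativity of solutions} to the truncated system \eqref{eq: truncated global solution mxr} with $b_i=f_{i,n}$ and $g_{ik}=\sigma_{ik,n}$, asserting only that these satisfy the lemma's hypotheses ``by construction.'' Your checks that the radial retraction $\pi_n$ is globally Lipschitz, takes values in a compact set, and preserves signs and vanishing coordinates, together with the positive-part extension of $f,\sigma$ off $\mathbb R^m_+$, spell out what the paper leaves implicit; the extension also repairs the fact that the paper's truncation formally evaluates $f,\sigma$ outside their stated domain.
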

 \subsection{H\"older continuity and estimates on the stochastic convolution}
We turn our attention now to some estimates regarding the stochastic part of \eqref{eq: truncated global solution mxr}.
\begin{proposition}\label{proposition: Holder regularity of Z_ik}
Fix $i\in \{1,...,m\}$ and $k\in \{1,...r\}$ and let $Z_{ik}$ be the stochastic convolution
\begin{equation}\label{eq: Z_ik}
Z_{ik} := \int_0^t\int_D G_i(t-s,x,y)g_{ik}(s,y)W_k(dyds),
\end{equation}
where $g_{ik}:\mathbb R\rightarrow \mathbb R$ is bounded and adapted and $G_i$ is the heat kernel corresponding to the operator $d_i\Delta$. Then, for every $t\in [0,T]$,
\[
\mathbb E\sup_{t\in [0,T]}||Z_{ik}(t)||^p_{W^{\varepsilon,p}(D)}\leq C_{i,k,p,\varepsilon,T}||g_{ik}||_{L^{\infty}(D)}^p,
\]
for $\varepsilon<1-\eta$, where $\eta$ is given in Assumption \ref{assump: convolution singularity}, and for sufficiently large $p$ depending on $\varepsilon$ and $\eta$.
\end{proposition}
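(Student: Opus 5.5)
The plan is to use the factorization method of Da Prato--Kwapie\'n--Zabczyk. This moves the supremum over time outside the stochastic integral. We then control the $W^{\varepsilon,p}(D)$ norm through smoothing estimates of the heat semigroup $S_i(t)$ generated by $d_i\Delta$.

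Fix $\alpha\in(0,1)$ to be chosen. Since $\int_s^t (t-r)^{\alpha-1}(r-s)^{-\alpha}dr = \pi/\sin(\pi\alpha)$ and the semigroup property holds, we can write
\[
Z_{ik}(t) = \frac{\sin(\pi\alpha)}{\pi}\int_0^t (t-r)^{\alpha-1} S_i(t-r) Y_\alpha(r)\,dr,
\qquad
Y_\alpha(r,x) := \int_0^r\int_D (r-s)^{-\alpha}G_i(r-s,x,y)g_{ik}(s,y)W_k(dyds).
\]
This identity is justified by the stochastic Fubini theorem, using the boundedness of $g_{ik}$.

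The first step is a moment bound on $Y_\alpha$. For fixed $(r,x)$, the Burkholder--Davis--Gundy inequality, boundedness of $g_{ik}$, and the pointwise nonnegativity of $L_k$ (Assumption \ref{assump: Posivitiy of kernel L}) together with Assumption \ref{assump: convolution singularity} give
\[
\mathbb E|Y_\alpha(r,x)|^p \le C_p\|g_{ik}\|_{L^\infty}^p\Big(\int_0^r (r-s)^{-2\alpha-\eta}ds\Big)^{p/2},
\]
which is finite uniformly in $(r,x)\in[0,T]\times D$ provided $2\alpha+\eta<1$. Integrating over $D$ and $[0,T]$ then yields $\mathbb E\|Y_\alpha\|_{L^p(0,T;L^p(D))}^p\le C\|g_{ik}\|^p_{L^\infty}$.

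The second step is deterministic. We use the analytic smoothing bound $\|S_i(t)v\|_{W^{\varepsilon,p}(D)}\le Ct^{-\varepsilon/2}\|v\|_{L^p(D)}$, which holds for both Dirichlet and Neumann conditions via interpolation between $L^p$ and $W^{1,p}$ and the gradient estimate $t^{-1/2}$. With it we bound
\[
\sup_{t\le T}\|Z_{ik}(t)\|_{W^{\varepsilon,p}} \le C\sup_{t\le T}\int_0^t (t-r)^{\alpha-1-\varepsilon/2}\|Y_\alpha(r)\|_{L^p}dr \le C\Big(\int_0^T r^{(\alpha-1-\varepsilon/2)q}dr\Big)^{1/q}\|Y_\alpha\|_{L^p(0,T;L^p)},
\]
where $1/p+1/q=1$ by H\"older. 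The time integral is finite iff $\alpha-\varepsilon/2>1/p$.

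Finally we choose the parameters. Given $\varepsilon<1-\eta$, we need $\alpha$ with $\varepsilon/2+1/p<\alpha<(1-\eta)/2$. Such an $\alpha$ exists exactly when $\varepsilon<1-\eta-2/p$, i.e.\ for $p$ large depending on $\varepsilon,\eta$, as in the statement. Taking $p$-th moments and combining the two steps then gives the claim.

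The main obstacle is verifying the fractional smoothing estimate for the Dirichlet/Neumann heat semigroup on $D$ in the $W^{\varepsilon,p}$ norm defined in the paper, together with the stochastic Fubini justification. For the former I would cite standard analytic-semigroup theory: identify $W^{\varepsilon,p}$ with a real interpolation space and use $\|(-\Delta)^{\varepsilon/2+\delta}S(t)\|\lesssim t^{-\varepsilon/2-\delta}$. Alternatively, one can bound the Gagliardo seminorm directly with heat kernel gradient bounds. The cost of this route is an arbitrarily small loss $\delta$, which is absorbed into the strict inequalities above.
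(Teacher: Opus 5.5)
Your proposal is correct and matches the paper's proof: you use the factorization with $\alpha<(1-\eta)/2$ so that the BDG bound on $Y_\alpha$ (via the convolution-singularity assumption and $L_k\ge 0$) is finite, then the smoothing bound $\|S_i(t)v\|_{W^{\varepsilon,p}(D)}\le Ct^{-\varepsilon/2}\|v\|_{L^p(D)}$, then H\"older in time under the constraint $\alpha-\varepsilon/2>1/p$. The only difference is that the paper quotes this smoothing bound directly from estimate (2.4) of Cerrai, whereas you sketch deriving it by interpolation or from fractional powers of the generator.
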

\begin{proof}
    We make use of the factorization method by Da Prato and Zabczyk (Theorem 5.10 in \cite{da2014stochastic}). In particular, let $\alpha \in (0,1)$ and define
\[
Z_{\alpha,ik}(t,x):= \int_0^t \int_D (t-s)^{-\alpha} G_i(t-s,x,y)g_{ik}(s,y)W_{k}(dy ds),
\]
then
\[
Z_{ik}(t,x) = \frac{\sin( \pi\alpha)}{\pi} \int_0^t \int_D (t-s)^{\alpha-1} G_i(t-s,x,y)Z_{\alpha,ik}(s,y)dyds.
\]
We apply the BDG inequality,
\begin{align*}
\mathbb{E}\lvert Z_{\alpha,ik}(t,x) \rvert^p
&\le C_p\, \mathbb{E}\Bigg(
\int_0^t \int_D\int_D (t-s)^{-2\alpha}
G_i(t-s,x,y_1) G_i(t-s,x,y_2) \\
&\qquad\qquad\qquad\qquad
\times g_{ik}(s,y_1) g_{ik}(s,y_2)
L_k(y_1,y_2)
dy_1dy_2ds
\Bigg)^{\frac{p}{2}}.
\end{align*}
We use that $g_{ik}$ is bounded, $L_k\geq 0$ (Assumption \ref{assump: Posivitiy of kernel L}), $G_i\geq 0$, and that the heat kernel satisfies the singularity estimate of Assumption \ref{assump: convolution singularity}, so that for fixed $t \in [0,T]$, and $x \in D$,
\begin{align*}
\mathbb E|Z_{\alpha,ik}(t,x)|^p&\le C_p \|g_{ik}\|_\infty^p
\left(
\int_0^t \int_D\int_D
(t-s)^{-2\alpha}G_i(t-s,x,y_1) G_i(t-s,x,y_2)\,
L_k(y_1,y_2)\,
dy_1\, dy_2\, ds
\right)^{\frac{p}{2}} \\
&\le C_{i,k,p} \|g_{ik}\|_\infty^p
\left(
\int_0^t (t-s)^{-2\alpha - \eta}\, ds
\right)^{\frac{p}{2}}.
\end{align*}
If $\alpha < \frac{1-\eta}{2}$, the last integral is finite and
\begin{align*}
\mathbb E|Z_{\alpha,ik}(t,x)|^p&\le C_{i,k,p,\alpha}\, \|g_{ik}\|_\infty^p\, t^{(1-\eta-2\alpha)\frac{p}{2}}.
\end{align*}
Thus
\begin{equation}\label{eq: facorization L^p bound}
\mathbb{E}\|Z_{\alpha,ik}(t)\|_{L^p(D)}^p
= \mathbb{E}\int_D |Z_{\alpha,ik}(t,x)|^p\, dx
\le |D|C_{i,k,p,\alpha}\, t^{(1-\eta-2\alpha)\frac{p}{2}}.
\end{equation}
Now, let $S_i(t)$ be the semigroup associated with the heat kernel $G_i$. Then we may represent $Z_{ik}(t)$ as
\begin{align*}
Z_{ik}(t)
&= C_{\alpha} \int_0^t (t-s)^{\alpha-1} S(t-s)\, Z_{\alpha,ik}(s)\, ds.
\end{align*}
And then
\begin{align*}
\|Z_{ik}(t)\|_{W^{\varepsilon,p}(D)}
&\le C_{\alpha} \int_0^t (t-s)^{\alpha-1}
\|S_i(t-s) Z_{\alpha,ik}(s)\|_{W^{\varepsilon,p}(D)}\, ds.
\end{align*}
For any $t\in [0,T]$ and $\varepsilon>0$, the semigroup $S(t)$ maps $L^p(D)$ into $W^{\varepsilon,p}(D)$ and by estimate (2.4) in \cite{cerrai2003stochastic} we get
\begin{align*}
||Z_{ik}(t)||_{W^{\varepsilon,p}(D)}&\le C_{i,k} \int_0^t (t-s)^{\alpha-1-\frac{\varepsilon}{2}}
\|Z_{\alpha,ik}(s)\|_{L^p}\, ds.
\end{align*}
Then, by H\"older's inequality
\begin{align*}
&||Z_{ik}(t)||_{W^{\varepsilon,p}(D)}\le C_{i,k,\varepsilon}
\left(
\int_0^t (t-s)^{(\alpha-1-\frac{\varepsilon}{2})\frac{p}{p-1}} ds
\right)^{\frac{p-1}{p}}
\left(
\int_0^t \|Z_{\alpha,ik}(s)\|_{L^p(D)}^pds
\right)^{\frac{1}{p}}.
\end{align*}
For the first integral to be finite we require
\[
 p > \frac{1}{\alpha - \frac{\varepsilon}{2}} \quad \text{and} \quad \frac{\varepsilon}{2} < \alpha.
\]
Then,
\begin{align*}
\|Z_{ik}(t)\|_{W^{\varepsilon,p}(D)}
&\le C_{i,k,p,\varepsilon,\alpha}\,
t^{\alpha - 1 - \frac{\varepsilon}{2} + \frac{p-1}{p}}
\left(
\int_0^t \|Z_{\alpha,ik}(s)\|_{L^p(D)}^p\, ds
\right)^{\frac{1}{p}}.
\end{align*}
So, we obtain
\begin{align*}
\mathbb{E} \sup_{t\in [0,T]} \|Z_{ik}(t)\|_{W^{\varepsilon,p}(D)}^p
&\le C_{i,k,p,\varepsilon,\alpha}\,
T^{\alpha p - \frac{\varepsilon p}{2} - p + 1}\,
\mathbb{E}
\int_0^T \int_D |Z_{\alpha,ik}(s,y)|^pdyds.
\end{align*}
Therefore, by \eqref{eq: facorization L^p bound}
\begin{align*}
\mathbb E\sup_{t\in [0,T]}||Z_{ik}(t)||_{W^{\varepsilon,p}(D)}^p \leq C_{i,k,p,\varepsilon,\alpha}T^{2-p+\frac{(1-\eta-\varepsilon)p}{2}}||g_{ik}||_{\infty}^p,
\end{align*}
which, as we remarked earlier, requires $\frac{\varepsilon}{2} < \alpha \text{ and } p>\frac{1}{\alpha - \frac{\varepsilon}{2}}$. And since we have already required $\alpha < \frac{1-\eta}{2}$, we conclude that
\[
Z_{ik}(t,\cdot) \in W^{\varepsilon,p}(D) \quad \text{ for any } \varepsilon<1-\eta, \text{ for } p \text{ large enough }.
\]
\end{proof}
\begin{remark}
If we choose $p$ sufficiently large in the above Proposition so that $\varepsilon p>d$, then by the Sobolev
embedding theorem (see, for instance, Theorem 6(ii) of Section 5.6.3 in \cite{evans2022partial}),
\[
W^{\varepsilon,p}(D)\hookrightarrow C^{\varepsilon-\frac{d}{p}}(\overline D),
\]
and hence
\[
Z_{ik}(t,\cdot)\in C^{\varepsilon-\frac{d}{p}}(\overline D).
\]
Since $p$ can be taken arbitrarily large and we assumed $\varepsilon < 1-\eta$, we conclude that
\begin{equation}\label{eq: Z(t,dot) Holder for any theta}
Z_{ik}(t,\cdot)\in C^{\theta}(\overline D)
\quad\text{for any } \theta < 1-\eta.
\end{equation}
\end{remark}
We can also make a H\"older regularity statement about the deterministic integral, as the next lemma shows.
\begin{lemma}\label{Lemma: Holder regularity of deterministic integral}
    If $f_i(t,x)\in L^{\infty}(Q_1)$, then
    \[
    \int_0^t\int_D G_i(t-s,x,y)f_i(s,y)dyds\in C^{\theta}(\overline{D}).
    \]
\end{lemma}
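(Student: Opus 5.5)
Write $v(t,x):=\int_0^t\int_D G_i(t-s,x,y)f_i(s,y)\,dy\,ds$ for the deterministic integral. I will argue by semigroup smoothing, in the same spirit as the proof of Proposition \ref{proposition: Holder regularity of Z_ik} but without the stochastic factorization step. Let $S_i(t)$ be the heat semigroup generated by $d_i\Delta$ with the relevant boundary condition, so that
\[
v(t)=\int_0^t S_i(t-s)f_i(s)\,ds .
\]
Since $D$ is bounded and $f_i\in L^\infty(Q_1)$, we have $f_i(s)\in L^p(D)$ for every $p\ge 1$. Moreover $\|f_i(s)\|_{L^p(D)}\le |D|^{1/p}\|f_i\|_{L^\infty(Q_1)}$ for a.e. $s$.

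Next I would apply the smoothing estimate (2.4) of \cite{cerrai2003stochastic}, already used in the proof of Proposition \ref{proposition: Holder regularity of Z_ik}. It gives
\[
\|S_i(t-s)h\|_{W^{\varepsilon,p}(D)}\le C(t-s)^{-\varepsilon/2}\|h\|_{L^p(D)}
\]
for $\varepsilon\in(0,1)$. Applying Minkowski's inequality for the Bochner integral in $W^{\varepsilon,p}(D)$ then yields
\[
\|v(t)\|_{W^{\varepsilon,p}(D)}\le C\int_0^t (t-s)^{-\varepsilon/2}\,ds\;|D|^{1/p}\|f_i\|_{L^\infty(Q_1)}\le C_{\varepsilon,p}\,t^{1-\varepsilon/2}\|f_i\|_{L^\infty(Q_1)} .
\]
The time integral is finite because $\varepsilon<2$. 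Unlike the stochastic case, no restriction tied to $\eta$ appears here, so any $\varepsilon\in(0,1)$ is allowed. The bound is also uniform for $t\in[0,1]$.

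To pass to Hölder continuity, I would choose $p$ large enough that $\varepsilon p>d$ and invoke the fractional Sobolev embedding $W^{\varepsilon,p}(D)\hookrightarrow C^{\varepsilon-d/p}(\overline D)$ (Theorem 6(ii), Section 5.6.3 of \cite{evans2022partial}), as in the Remark following Proposition \ref{proposition: Holder regularity of Z_ik}. Given a target $\theta\in(0,1)$, I pick $\varepsilon\in(\theta,1)$ and then $p$ large enough that $\varepsilon-d/p\ge\theta$. This gives $v(t)\in C^\theta(\overline D)$ with norm bounded by $C\|f_i\|_{L^\infty(Q_1)}$, for every $\theta<1$ and all $t$ (for $t\le 1$, or $t\le T$ if $f_i$ is bounded on $Q_T$).

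The main obstacle is justifying the smoothing estimate and the embedding on a bounded domain with Dirichlet or Neumann boundary conditions. This is exactly what the paper already relies on through \cite{cerrai2003stochastic} and \cite{evans2022partial}, so I would simply cite them. As a fallback, one could prove the Hölder bound directly from Gaussian gradient bounds $|\nabla_x G_i(t,x,y)|\le Ct^{-(d+1)/2}e^{-c|x-y|^2/t}$. Splitting the time integral at $s=t-|x-x'|^2$, with the short-time part controlled by $\int G_i\le 1$ and the long-time part by the gradient bound, would give $|v(t,x)-v(t,x')|\le C|x-x'|^\theta$ for any $\theta<1$.
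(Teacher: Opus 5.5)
Your argument is correct and is essentially the paper's: smooth with the heat semigroup and integrate the resulting time singularity, $(t-s)^{-\theta/2}$ in the paper and $(t-s)^{-\varepsilon/2}$ in yours. The paper applies Cerrai's direct estimate (2.5), $\|S_i(t)h\|_{C^{\theta}(\overline D)}\le Ct^{-\theta/2}\|h\|_{L^\infty(D)}$, whereas you go through the $L^p\to W^{\varepsilon,p}$ estimate (2.4) and then the fractional Sobolev embedding, which also shows that the paper's restriction $\theta<1-\eta$ is not needed for this deterministic term (any $\theta<1$ works).
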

\begin{proof}
    Applying the smoothing estimate (2.5) in \cite{cerrai2003stochastic}, we see that
    \[    ||S_i(t)f_i||_{C^{\theta}(\overline{D})} \leq Ct^{-\frac{\theta}{2}}||f_i||_{L^{\infty}(D)},
    \]
    for any $0<\theta<1-\eta$. Therefore,
\begin{equation}\label{eq: determ smoothing}
\Bigg|\Bigg| \int_0^t S_i(t-s)f_i(s)ds\Bigg|\Bigg|_{C^{\theta}(\overline{D})} \leq C||f_i||_{L^{\infty}(D)}\int_0^t (t-s)^{-\frac{\theta}{2}}ds<\infty.
\end{equation}
\end{proof}
Returning back to $u_{i,n}$ in \eqref{eq: truncated global solution mxr}, and recalling that $f_{i,n}(u_n(s,y))$ was bounded and $\sigma_{ik,n}(u_n(s,y))$ was bounded and adapted, we obtain the following Corollary that establishes the H\"older continuity of $u_{i,n}$.
\begin{corollary}[H\"older continuity]
    For the truncated global solutions as defined in \eqref{eq: truncated global solution mxr}, $u_{i,n}(t,\cdot)\in C^{\theta}(\overline{D})$ almost surely for every $t>0$ and for any $\theta<1-\eta$, where $\eta$ was given in Assumption \ref{assump: convolution singularity}.
\end{corollary}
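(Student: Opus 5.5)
The plan is to split $u_{i,n}(t,\cdot)$ into the three terms of the mild formulation \eqref{eq: truncated global solution mxr} and show that each term lies in $C^{\theta}(\overline D)$ for every $\theta<1-\eta$. Since $C^{\theta}(\overline D)$ is a linear space, the sum is then Hölder as well.

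\emph{Stochastic terms.} For fixed $n$, the truncated coefficient $\sigma_{ik,n}$ is bounded, say by a constant $K_n$. This holds because it equals $\sigma_{ik}$ evaluated on the compact set $\{\max_j|a_j|\le n\}$, where $\sigma_{ik}$ is continuous. Hence $g_{ik}(s,y):=\sigma_{ik,n}(u_n(s,y))$ is bounded and adapted. Proposition \ref{proposition: Holder regularity of Z_ik} then applies and gives
\[
\mathbb E\sup_{t\in[0,T]}\|Z_{ik}(t)\|^p_{W^{\varepsilon,p}(D)}<\infty
\]
for every $\varepsilon<1-\eta$ and every sufficiently large $p$. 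Given $\theta<1-\eta$, we pick $\varepsilon\in(\theta,1-\eta)$ and then take $p$ large enough that $\varepsilon-d/p\ge\theta$. The Sobolev embedding in the Remark following Proposition \ref{proposition: Holder regularity of Z_ik} gives $Z_{ik}(t,\cdot)\in C^{\theta}(\overline D)$. Because the moment bound holds with the supremum over $t$ inside the expectation, this is true almost surely, simultaneously for all $t\in[0,T]$.

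\emph{Drift term.} In the same way, $f_{i,n}(u_n(s,y))$ is bounded by $\sup_{\max_j|a_j|\le n}|f_i(a)|<\infty$, using Assumption \ref{assump: locally Lipschitz f}. Lemma \ref{Lemma: Holder regularity of deterministic integral} then shows, pathwise, that the deterministic convolution lies in $C^\theta(\overline D)$.

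\emph{Initial datum term.} This term is $S_i(t)u_{i0}$. We use the smoothing estimate (2.5) of \cite{cerrai2003stochastic} already invoked in that lemma, namely $\|S_i(t)u_{i0}\|_{C^\theta}\le Ct^{-\theta/2}\|u_{i0}\|_{L^\infty(D)}$. Since $u_{i0}\in L^\infty(D)$ by Assumption \ref{assump: initial data are >= 0 and in L^oo}, this is finite for each fixed $t>0$. This term is the reason the statement excludes $t=0$.

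I expect the main technical point to be the almost-sure, all-$t$ nature of the claim for the stochastic convolution. Two things need care here. First, the factorization bound must be applied with $g_{ik}$ depending on $(s,y)$ rather than on a scalar. Second, the constant depends on $n$ through $K_n$; this is harmless here because $n$ is fixed. Everything else is a direct combination of the earlier results.
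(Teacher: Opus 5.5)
Your proposal is correct and follows the paper's route. The paper's proof simply combines Proposition~\ref{proposition: Holder regularity of Z_ik}, via the Sobolev embedding Remark, for the stochastic convolutions with Lemma~\ref{Lemma: Holder regularity of deterministic integral} for the drift convolution, exactly as you do. Your explicit treatment of the initial-data term $S_i(t)u_{i0}$ through the same smoothing estimate fills in a step that the paper's one-line proof leaves implicit, and it correctly identifies that term as the reason $t>0$ is required.
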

\begin{proof}
    The proof is a direct consequence of Proposition \ref{proposition: Holder regularity of Z_ik} and Lemma \ref{Lemma: Holder regularity of deterministic integral}.
\end{proof}
The next lemma was proved in \cite{salins2025nonexplosion} for a single stochastic convolution. A trivial modification of the proof allows one to obtain the following for \eqref{eq: Z_ik}.
\begin{lemma}[Theorem 1 in \cite{salins2025nonexplosion}]\label{lemma: sup norm Z}
    Let $p$ be large enough such that $\frac{d+2}{p} + \frac{d}{p-2} < 1-\eta$, where $\eta$ is given in Assumption \ref{assump: convolution singularity}, and consider the stochastic convolution as defined in \eqref{eq: Z_ik}. Then, there exists a constant $C_p>0$, independent of $T>0$ and $g_{ik}$, such that
\begin{equation}\label{eq: sup norm Z}
\mathbb{E}\sup_{(t,x)\in Q_T}\sup_{i}\bigl|Z_{ik}(t,x)\bigr|^p\leq C_p
T^{\frac{(1-\eta)(p-2)}{2}-d}\,
\mathbb{E}
\int_0^T\int_D\int_D \bigl|g_{ik}(s,y_1)g_{ik}(s,y_2)\bigr|^{\frac{p}{2}} L_k(y_1,y_2)dy_1dy_2ds.
\end{equation}
\end{lemma}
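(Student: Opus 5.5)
The plan is to adapt the proof of Theorem 1 in \cite{salins2025nonexplosion}, which treats a single stochastic convolution, to each fixed pair $(i,k)$ separately. For each pair, $Z_{ik}$ is a scalar stochastic convolution driven by the single noise $W_k$ with kernel $L_k$ and heat kernel $G_i$. Assumptions \ref{assump: Posivitiy of kernel L}, \ref{assump: heat kernel singularity} and \ref{assump: convolution singularity} for this pair are exactly the hypotheses used there. The supremum over $i$ is then handled by $\sup_i |Z_{ik}|^p \le \sum_{i=1}^m |Z_{ik}|^p$, which only costs a factor $m$ that is absorbed into $C_p$.

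For a fixed pair, I would follow the factorization approach, as in Proposition \ref{proposition: Holder regularity of Z_ik}, but keep track of $g_{ik}$ instead of bounding it by its sup norm.

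\textbf{Step 1.} Write
\[
Z_{ik}(t)=\frac{\sin(\pi\alpha)}{\pi}\int_0^t (t-s)^{\alpha-1}S_i(t-s)Z_{\alpha,ik}(s)\,ds .
\]
Then estimate $\sup_{x}|Z_{ik}(t,x)|$ pointwise using Assumption \ref{assump: heat kernel singularity}. Apply H\"older in $(s,y)$ with exponents $p$ and $p/(p-1)$, using $\int_D G_i(t-s,x,y)^{p/(p-1)}dy \le C(t-s)^{-\frac{d}{2(p-1)}}$. This gives
\[
\sup_{(t,x)\in Q_T}|Z_{ik}|^p \le C\,T^{\gamma}\int_0^T\!\!\int_D |Z_{\alpha,ik}(s,y)|^p\,dy\,ds ,
\]
provided the time exponent $(\alpha-1-\frac{d}{2p})\frac{p}{p-1}>-1$.

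\textbf{Step 2.} Bound $\mathbb E|Z_{\alpha,ik}(s,y)|^p$ by BDG. The quadratic variation is
\[
\int_0^s\!\!\int\!\!\int (s-r)^{-2\alpha}G_i G_i\, g_{ik}(r,y_1)g_{ik}(r,y_2)L_k(y_1,y_2)\,dy_1dy_2dr .
\]
Apply H\"older (Jensen) to this with exponent $p/2$ against the measure
\[
(s-r)^{-2\alpha}G_i(s-r,y,y_1)G_i(s-r,y,y_2)L_k\,dy_1dy_2dr ,
\]
whose total mass is at most $Cs^{1-\eta-2\alpha}$ by Assumption \ref{assump: convolution singularity} when $2\alpha<1-\eta$. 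This yields
\[
\mathbb E|Z_{\alpha,ik}(s,y)|^p \le C s^{(1-\eta-2\alpha)(\frac p2-1)}\,\mathbb E\int_0^s\!\!\int\!\!\int (s-r)^{-2\alpha}G_iG_i\,|g_{ik}g_{ik}|^{p/2}L_k .
\]

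\textbf{Step 3.} Integrate in $y$. The factor $\int_D G_i(s-r,y,y_1)G_i(s-r,y,y_2)\,dy = G_i(2(s-r),y_1,y_2)$ is at most $C(s-r)^{-d/2}$ by the semigroup property and Assumption \ref{assump: heat kernel singularity}. Then integrate in $s$ and $r$ by Fubini.

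The main obstacle is the bookkeeping of exponents so that all time singularities are integrable. The factor $(s-r)^{-2\alpha-d/2}$ is not integrable for $d\ge2$, so Step 3 as stated is too crude. I would instead split the power, using Assumption \ref{assump: convolution singularity} on a $\frac{p-2}{p}$ share of the $G_iG_iL_k$ mass and the $t^{-d/2}$ bound only on the remaining $\frac2p$ share. This interpolation is what produces the condition $\frac{d+2}{p}+\frac{d}{p-2}<1-\eta$ and the final power $T^{\frac{(1-\eta)(p-2)}{2}-d}$. I would choose $\alpha$ just below $\frac{1-\eta}{2}$ and verify that the accumulated exponents match those in \eqref{eq: sup norm Z}. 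Since every constant depends only on $p,\alpha,\eta,d$, the heat kernel constants and $m$, the final $C_p$ is independent of $T$ and $g_{ik}$, as stated.
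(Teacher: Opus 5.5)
\paragraph{Verdict: genuine gap in Steps 2--3.} The paper gives no argument beyond citing Theorem 1 of \cite{salins2025nonexplosion}, so the question is whether your reconstruction closes. Your architecture is sound: for each pair $(i,k)$ you use factorization, then the pointwise H\"older step, which gives $\sup_{Q_T}|Z_{ik}|^p\le CT^{\alpha p-1-\frac d2}\|Z_{\alpha,ik}\|^p_{L^p(Q_T)}$ for $\alpha>\frac{d+2}{2p}$, then BDG. The gap is in the only nontrivial step. The fix you name applies Assumption \ref{assump: convolution singularity} to a $\frac{p-2}{p}$ share of $G_iG_iL_k$ and the $t^{-d/2}$ bound to the remaining $\frac2p$ share. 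That is exactly the Jensen split of your Step 2. The $\frac2p$ share, raised to the power $\frac p2$, returns the full $G_iG_i$, whose $y$-integral is $G_i(2(s-r),y_1,y_2)\le C(s-r)^{-d/2}$. You are then back at the non-integrable $(s-r)^{-2\alpha-d/2}$. If you instead apply $G_i\le Ct^{-d/2}$ pointwise to that share, the $y$-integration only produces $|D|$; this breaks scaling and gives at best $T^{\frac{(1-\eta)(p-2)}{2}-\frac{3d}{2}}$. Also, $\alpha$ cannot be taken just below $\frac{1-\eta}{2}$. So your claim that this interpolation produces the stated hypothesis and power is unsupported; no term $\frac{d}{p-2}$ ever appears in your scheme.

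\paragraph{What closes the argument.} You need an uneven split of the heat-kernel factor. With $w=(s-r)^{-2\alpha}$, write
\[
wG_iG_iL_k|g_{ik}g_{ik}|=\Bigl[w^{\frac{p-2}{p}}(G_iG_i)^{\frac{p-1}{p}}L_k^{\frac{p-2}{p}}\Bigr]\Bigl[w^{\frac2p}(G_iG_i)^{\frac1p}L_k^{\frac2p}|g_{ik}g_{ik}|\Bigr],
\]
and apply H\"older with exponents $\frac{p}{p-2}$ and $\frac p2$.
\begin{itemize}
\item In the first factor, $(G_iG_i)^{\frac{p-1}{p-2}}\le C(s-r)^{-\frac{d}{p-2}}G_iG_i$ by Assumption \ref{assump: heat kernel singularity}. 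Assumption \ref{assump: convolution singularity} then bounds its mass by $Cs^{1-2\alpha-\eta-\frac{d}{p-2}}$, provided $2\alpha+\eta+\frac{d}{p-2}<1$. This is where the $\frac{d}{p-2}$ in the hypothesis comes from.
\item The second factor, raised to $\frac p2$, carries only the heat-kernel factor $\bigl(G_i(s-r,y,y_1)G_i(s-r,y,y_2)\bigr)^{1/2}$. Its $y$-integral is at most $1$ by Cauchy--Schwarz and \eqref{eq: int of heat kernel <= 1}. So no $d/2$ singularity arises, and only $\int_r^T(s-r)^{-2\alpha}\,ds\le CT^{1-2\alpha}$ remains.
\end{itemize}
The admissible window is $\frac{d+2}{2p}<\alpha<\frac12\bigl(1-\eta-\frac{d}{p-2}\bigr)$, which is nonempty exactly under the stated condition. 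The exponents add to
\[
\Bigl(\alpha p-1-\tfrac d2\Bigr)+\Bigl(\tfrac{(1-\eta)(p-2)}{2}-\alpha(p-2)-\tfrac d2\Bigr)+(1-2\alpha)=\tfrac{(1-\eta)(p-2)}{2}-d .
\]

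\paragraph{A smaller point.} Bounding $\sup_i|Z_{ik}|^p\le\sum_i|Z_{ik}|^p$ turns the right-hand side into a sum over $i$ of different $g_{ik}$-terms, not $m$ times a single one. The statement should be read for fixed $i$.
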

We now make some useful observations. To this end, we set
    \begin{equation}\label{eq: alpha}
a := \frac{(1-\eta)(p-2)}{2}-d.
    \end{equation}
Besides bounds on the supremum norm, the above Lemma will also serve us by providing direct $L^p(Q_t)$-norm bounds for $Z_{ik}$. Indeed, 
\[
\int_0^t\int_D |Z_{ik}(r,x)|^pdxdr\leq t|D|\sup_{r\in [0,t]}\sup_{x\in D} |Z_{ik}(r,x)|^p,
\]
and therefore
taking expectations and applying Lemma \ref{lemma: sup norm Z} gives
\begin{equation}\label{eq: Lp norm Z}
\mathbb E||Z_{ik}||_{L^p(Q_t)}^{p}
\leq C_p t^{a+1}
\mathbb E\int_0^t\int_D\int_D
|g_{ik}(s,y_1)g_{ik}(s,y_2)|^{\frac{p}{2}}
L_k(y_1,y_2)dy_1dy_2ds.
\end{equation}
Moreover, using the inequality $ab\leq \dfrac{1}{2}(a^2+b^2)$ and Assumption \ref{assump: Integrability of kernel L} we may write,
\begin{align*}
\int_D\int_D
|g_{ik}(s,y_1)g_{ik}(s,y_2)|^{\frac{p}{2}} L_k(y_1,y_2)dy_1dy_2&\leq C\int_D |g_{ik}(s,y)|^pdy.
\end{align*}
Therefore, by \eqref{eq: Lp norm Z} we can also obtain
\begin{equation}\label{eq: Z_ik bond with g_ik|^p}
\mathbb E||Z_{ik}||^p_{L^p(Q_t)} \leq C_pt^{a+1}\mathbb E \int_0^t\int_D |g_{ik}(s,y)|^p dyds.
\end{equation}
For instance, returning to \eqref{eq: truncated global solution mxr}, we set
\begin{equation}\label{eq: Z_i,n}
Z_{i,n}(t,x) := \sum_{k=1}^r\int_0^t\int_D G_i(t-s,x,y)\sigma_{ik,n}(u_n(s,y))W_k(dyds) = \sum_{k=1}^r Z_{ik,n}(t,x).
\end{equation}
Since $\sigma_{ik,n}(u_n(s,y))$ are bounded and adapted, an application of \eqref{eq: Z_ik bond with g_ik|^p} yields
\[
\mathbb E||Z_{ik,n}||^p_{L^p(Q_t)} \leq C_pt^{a+1}\mathbb E\int_0^t\int_D |\sigma_{ik,n}(u_n(s,y))|^pdyds.
\]
And since $\sigma_{ik,n}$ grows at most linearly (Assumption \ref{assump: Linear growth sigma}),
\begin{equation*}
\mathbb E||Z_{ik,n}||^p_{L^p(Q_t)} \leq C_pt^{a+1}\mathbb E\int_0^t \int_D\bigl(1+|u_{1,n}(s,y)|^p+...+|u_{m,n}|^p\bigr)dyds,
\end{equation*}
where the constant $C_p$ depends on the linear growth 
constant of $\sigma$. Setting
\[
Y_n(t) := ||u_{1,n}||^p_{L^p(Q_t)}+...+||u_{m,n}||^p_{L^p(Q_t)} = \sum_{i=1}^m ||u_{i,n}||_{L^p(Q_t)}^p,
\]
gives
\begin{equation*}\label{eq: Lp norm Z with Y}
    \mathbb E||Z_{ik,n}||^p_{L^p(Q_t)}\leq C_pt^{a+1}(t+\mathbb EY_n(t)).
\end{equation*}
Therefore,
\begin{equation}\label{eq: Lp norm Z_i}
\mathbb E||Z_{i,n}||^p_{L^p(Q_t)} \leq \sum_{k=1}^r \mathbb E||Z_{ik,n}||^p_{L^p(Q_t)}\leq C_pt^{a+1}(t+\mathbb EY_n(t)).
\end{equation}
An identical argument also yields
\begin{equation}\label{eq: sup norm Z_i}
   \mathbb E \sup_{t\in [0,T]}\sup_{x\in D}\sup_{i=1,...,m}|Z_{i,n}(t,x)|^p \leq C_{p,T}(1+\mathbb EY_n(T)).
\end{equation}
\subsection{A lemma from the theory of parabolic PDE}
We will call $v = (v_1,...,v_m)$ a \textit{regular function} if it satisfies a Dirichlet or Neumann boundary condition (namely, if $v_i=0$ or $\partial_n v_i = 0$ on $(t,x) \in (0,T)\times \partial D$) as well as the following regularity conditions:
\begin{equation}\label{eq: system classical solution}
\begin{aligned}
\text{(a)}\quad & v\in C([0,T); L^1(D)^m)\cap L^{\infty}([0,T-\tau]\times D)^m, \ \forall \tau\in (0,T), \\
\text{(b)}\quad & \forall k,j=1,\dots,d,\ \forall p\geq 1:\ 
\partial_tv,\partial_{x_k}v,\partial_{x_kx_j} v \in L^p((\tau,T-\tau)\times D)^m,\ \forall \tau\in (0,T).
\end{aligned}
\end{equation}
We recall the following Lemma from the theory parabolic PDEs, which has been popularized by Michel Pierre. For its proof, we refer to \cite{pierre1987global} as well as \cite{pierre2010global} for a more modern treatment.
\begin{lemma}[Lemma 3.4 in \cite{pierre2010global}]\label{lemma: Duality lemma}
Let $v_1,v_2$ be regular functions in the sense of \eqref{eq: system classical solution} satisfying
\begin{equation}\label{heat equations inequality}
\partial_t v_1 - d_1 \Delta v_1 + \theta_1 v_1 
\le \theta_2 \partial_t v_2 - d_2 \Delta v_2 + \theta_3 v_2 + H
\end{equation}
together with the same constant Neumann or Dirichlet boundary conditions for $v_1,v_2$, where $\theta_i \in \mathbb{R}$ and $H \in L^p(Q_T)$, $H \ge 0$. Then, there exists $C>0$ depending on $p,d_1,d_2$ and the domain $D$ such that, for all $1<p<\infty$, $t\in (0,T]$, 
and with bounded initial data,
\begin{equation}\label{eq: pierre's lemma eq}
\|v_1^+\|_{L^p(Q_t)} \leq C \left( \|v_2\|_{L^p(Q_t)} + \|v_1(0)\|_{L^p(D)} + |\theta_2|  \|v_2(0)\|_{L^p(D)} + \int_0^t \|H(s)\|_{L^p(D)} ds \right).
\end{equation}
\end{lemma}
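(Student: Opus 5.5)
The plan is the duality argument of Pierre (Lemma 3.4 in \cite{pierre2010global}, going back to \cite{pierre1987global}). We reduce to a backward adjoint heat problem and use maximal $L^{p'}$-regularity.

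\textbf{Reduction.} Set $w := v_1 - \theta_2 v_2$, so that $\partial_t w - d_1\Delta w \le (d_2-\theta_2 d_1)\Delta v_2 + \theta_3 v_2 - \theta_1 v_1 + H$. Write $\Theta := (d_2-\theta_2 d_1)\Delta v_2$. The term $-\theta_1 v_1$ is handled by the usual exponential change of variables $v_i \mapsto e^{-\lambda t}v_i$. This only changes the constants by factors $e^{\lambda T}$ and shifts the $\theta_i$, so we may assume $\theta_1 \ge 0$ and drop that term when estimating $v_1^+$ (after restricting to the set where $v_1\ge 0$ via the test function below).

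\textbf{Duality.} Fix $t\in(0,T]$ and $\Phi \ge 0$ with $\Phi\in C_c^\infty(Q_t)$ and $\|\Phi\|_{L^{p'}(Q_t)}\le 1$. Let $\varphi\ge 0$ solve the backward problem
\[
-\partial_s\varphi - d_1\Delta\varphi = \Phi \ \text{ on } Q_t,\qquad \varphi(t)=0,
\]
with the same (Dirichlet or Neumann) boundary condition. By the maximum principle $\varphi \ge 0$. Maximal regularity in $L^{p'}$ gives
\[
\|\partial_s\varphi\|_{L^{p'}(Q_t)} + \|\Delta\varphi\|_{L^{p'}(Q_t)} \le C\|\Phi\|_{L^{p'}} ,
\]
and, by the trace/embedding estimate, $\|\varphi(0)\|_{L^{p'}(D)} \le C$ and $\|\varphi\|_{L^\infty(0,t;L^{p'}(D))}\le C$, with $C$ independent of $t\le T$.

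Multiply the inequality by $\varphi$ and integrate over $Q_t$. Integrating by parts in time and space is justified by the regularity (a)--(b), applied on $(\tau,t-\tau)$ and then letting $\tau\to0$, using $v\in C([0,T);L^1)$. The boundary terms cancel because $v_i$ and $\varphi$ share the boundary condition. This yields
\[
\int_{Q_t} v_1\Phi \le \int_D v_1(0)\varphi(0) + \theta_2\!\int_{Q_t} v_2(-\partial_s\varphi) - \theta_2\!\int_D v_2(0)\varphi(0) + \int_{Q_t} v_2\big(d_2\Delta\varphi\big)+\theta_3\!\int_{Q_t} v_2\varphi+\int_{Q_t} H\varphi .
\]

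\textbf{Estimating the terms.} Each term on the right is bounded by Hölder's inequality together with the regularity estimates for $\varphi$:
\[
\|v_1(0)\|_{L^p}+|\theta_2|\,\|v_2(0)\|_{L^p} + C\|v_2\|_{L^p(Q_t)} + \int_0^t\|H(s)\|_{L^p}\|\varphi(s)\|_{L^{p'}}\,ds .
\]
The last term is at most $C\int_0^t\|H(s)\|_{L^p}\,ds$ by the uniform-in-time $L^{p'}$ bound on $\varphi$. To get $v_1^+$ rather than $v_1$, take the supremum over $\Phi\ge0$: since $\sup_{\Phi\ge0,\|\Phi\|_{p'}\le1}\int v_1\Phi = \|v_1^+\|_{L^p}$, this gives \eqref{eq: pierre's lemma eq}.

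\textbf{Main obstacle.} The delicate point is the constant. Maximal regularity for the backward heat equation must be uniform in $t\in(0,T]$, and it must cover both boundary conditions. For Neumann this requires adding a zeroth-order term, which the exponential shift absorbs. A second subtle point is the justification of the integration by parts near $s=0$ using only $C([0,T);L^1)$ continuity. I would handle it by working on $(\tau,t)$, bounding $\varphi(\tau)$ in $L^\infty$ through smooth $\Phi$ (parabolic regularity), and then passing to the limit $\tau\to0$.
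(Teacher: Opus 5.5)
Your proposal follows the same route as the paper, which only cites Pierre's duality argument and omits the proof. Your adjoint problem, the $L^\infty(0,t;L^{p'}(D))$ bound on $\varphi$ that produces the $\int_0^t\|H(s)\|_{L^p(D)}\,ds$ term, the integration by parts near $s=0$, and the final supremum over $\Phi\ge0$ are all fine.

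One step fails as written. You cannot take $\theta_1>0$ and then drop $\theta_1\int_{Q_t}v_1\varphi$ ``after restricting to the set where $v_1\ge0$ via the test function.'' Localizing $\Phi$ does not localize $\varphi$. By the strong maximum principle, $\varphi(s,\cdot)>0$ on all of $D$ for every $s$ earlier than the support of $\Phi$. Meanwhile $v_1$ may change sign: in this paper the lemma is applied to $v_{i,n}=u_{i,n}-Z_{i,n}$, which the authors note need not be nonnegative. So $\theta_1\int v_1\varphi$ can be negative. Moving it to the right then leaves a term like $\theta_1\int v_1^-\varphi$, which nothing on the right-hand side of the estimate controls.

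The repair uses the tool you already introduced: choose $\lambda=-\theta_1$, i.e.\ work with $w_i=e^{\theta_1 t}v_i$. This gives
\[
\partial_t w_1-d_1\Delta w_1\le\theta_2\partial_t w_2-d_2\Delta w_2+(\theta_3-\theta_1\theta_2)w_2+e^{\theta_1 t}H,
\]
so there is no zeroth-order term on the left to dispose of. Equivalently, you can put $\theta_1$ into the adjoint operator, $-\partial_s\varphi-d_1\Delta\varphi+\theta_1\varphi=\Phi$; this keeps $\varphi\ge0$ and maximal $L^{p'}$-regularity on a bounded time interval.

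Three minor points:
\begin{itemize}
\item Your reduction to $w=v_1-\theta_2v_2$ is never used, and its coefficient should be $(\theta_2d_1-d_2)\Delta v_2$.
\item The Laplacian contributes $-d_2\int_{Q_t}v_2\Delta\varphi$, not $+d_2\int_{Q_t}v_2\Delta\varphi$. This is harmless, since you estimate these terms in absolute value.
\item Your constant depends on $T$ and on $\theta_1,\theta_2,\theta_3$. This is unavoidable; for instance, the coefficient of $\|v_2\|_{L^p(Q_t)}$ must grow with $|\theta_3|$. The dependence list in the statement is therefore incomplete; this is not a defect in your argument.
\end{itemize}
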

\begin{remark}
    The original statement of Lemma 3.4. in \cite{pierre2010global} differs from \eqref{eq: pierre's lemma eq} in the sense that the initial data have been absorbed into the constant $C$. However, for our later arguments it is important to keep track of the dependence on the initial data; therefore we make this dependence explicit. The proof of \eqref{eq: pierre's lemma eq} is completely identical to Pierre's in \cite{pierre2010global} and for this reason we omit it.
\end{remark}
\begin{remark}\label{Remark beneath duality lemma}
    If we replace \eqref{heat equations inequality} with
\begin{equation*}
\partial_t v_1 - d_1\Delta v_1 + \theta_1 v_1 \leq \Big( \theta_2\partial_t v_2 -d_2\Delta v_2 + \theta_3 v_2\Big)+\Big(\theta_4\partial_t v_3 - d_3\Delta v_3 + \theta_5 v_3\Big) + H,
\end{equation*}
we will equivalently get the bound,
\begin{align*}
\|v_1^+\|_{L^p(Q_t)}
&\le
C\Big(
\|v_2\|_{L^p(Q_t)}
+\|v_3\|_{L^p(Q_t)}
+\|v_1(0)\|_{L^p(D)}\\
&\qquad+|\theta_2|\,\|v_2(0)\|_{L^p(D)}+|\theta_4|\|v_3(0)\|_{L^p(D)}+\int_0^t \|H(s)\|_{L^p(D)}\,ds
\Big).
\end{align*}
In general, if we replace \eqref{heat equations inequality} with the more general inequality
\[
(\partial_t - d_1\Delta + \theta_1)v_1 \leq \sum\limits_{j=2}^m (\theta_j\partial_t  - d_j\Delta + \theta_j )v_{j}+H,
\]
we get the bound
\[
\|v_1^+\|_{L^p(Q_t)} \le C \left(
\sum_{j=2}^{m} \|v_j\|_{L^p(Q_t)}
+ \|v_1(0)\|_{L^p(\Omega)}
+ \sum_{j=2}^{m} |\theta_j| \, \|v_j(0)\|_{L^p(\Omega)}
+ \int_0^t \|H(s)\|_{L^p(\Omega)} \, ds
\right).
\]
\end{remark}
In view of the truncated global solution \eqref{eq: truncated global solution mxr}, we define $v_{i,n}(t,x):=u_{i,n}(t,x) - Z_{i,n}(t,x)$, where $Z_{i,n}$ is given by \eqref{eq: Z_i,n}. Namely,
\begin{equation}\label{eq: v_i,n definition}
v_{i,n}(t,x):=\int_D G_i(t,x,y)u_{i0}(y)dy + \int_0^t\int_D G_i(t-s,x,y)f_{i,n}(u_n(s,y))dyds.
\end{equation}
It is in the $v_{i,n}$ that we want to apply Lemma \ref{lemma: Duality lemma} later and therefore we must verify that $v_{i,n}$ is a regular function in the sense of \eqref{eq: system classical solution}. First, we notice that $v_{i,n}$ weakly solves the (random) PDE
\begin{equation}\label{eq: sys v_i,n}
\begin{cases}
    \partial_t v_{i,n}(t,x) - d_i\Delta v_{i,n}(t,x) = f_{i,n}(u_{n}(t,x)) \text{ in } (0,T)\times D,\\
    v_{i,n}=0 \text{ or } \dfrac{\partial v_{i,n}}{\partial n} = 0  \text{ on } (0,T)\times \partial D,\\
    v_{i,n}(0,x) = u_{i0}(x).
\end{cases}
\end{equation}
Since $u_{i0}(x)\in L^{\infty}(D)$ (Assumption \ref{assump: initial data are >= 0 and in L^oo}) and $f_{i,n}$ is bounded by construction, $v_{i,n}\in L^{\infty}(D)$ by \eqref{eq: v_i,n definition}. Furthermore, we may write \eqref{eq: v_i,n definition} in semigroup notation
\[
v_{i,n}(t) = S_i(t)u_{i0} + \int_0^t S_i(t-s)F_{i,n}(s)ds,
\]
where $S_i$ is the heat semigroup with kernel $G_i$ and $(F_{i,n}(s))(y) := f_{i,n}(u_n(s,y))$. Notice that $S_i(t)$ is a strongly continuous semigroup  in $L^1(D)$ as it is generated by the heat kernel. Therefore, $u_0\in L^{\infty}(D)\subset L^1(D)$, and $F_{i,n}$ is bounded above by a constant. In particular,
$F_{i,n}(s)\in L^\infty(D) \subset L^1(D)$. Thus $v_{i,n}(t)$ is continuous in $L^1(D)$ on $[0,T)$, which verifies part (a) of the characterization of a regular function. 

To verify part (b), notice that $F_{i,n}(s)\in W^{\varepsilon,p}(D)$ for all $s\in [0,T]$ and $\varepsilon\in (0,1-\eta)$. This is because $a\mapsto f_{i,n}(a)$ is globally Lipschitz continuous and $u_{i,n}(t,\cdot) \in W^{\varepsilon,p}(D)$ by combining Proposition \ref{proposition: Holder regularity of Z_ik} and Lemma  \ref{Lemma: Holder regularity of deterministic integral}. Also, if we denote by $D(A_i)$ the domain of $A_i = -d_i\Delta$, then, since $A_i$ is a second--order, uniformly elliptic operator with constant coefficients on a sufficiently regular domain, Theorem 3.1.1(ii) of \cite{lunardi2012analytic} applies: The inclusion $D(A_i)\hookrightarrow W^{2,p}(D)$ is continuous, with the norm on $D(A_i)$ being the graph norm, and we may write
\begin{align*}
    \Bigg \| \int_0^t S_i(t-s)F_{i,n}(s)ds \Bigg \|_{W^{2,p}(D)}&\leq \int_0^t ||S_i(t-s)F_{i,n}(s)||_{W^{2,p}(D)}ds\\
    &\leq C_p\int_0^t ||S_i(t-s)F_{i,n}(s)||_{L^p(D)}ds\\
    &+C_p\int_0^t ||A_iS_i(t-s)F_{i,n}(s)||_{L^p(D)}ds.
\end{align*}
Since $(S_i(t))_{t\ge0}$ is an analytic semigroup on $L^p(D)$, it is bounded. If we denote its boundedness constant by $M$, we obtain
\[
\int_0^t \|S_i(t-s)F_{i,n}(s)\|_{L^p(D)}\,ds
\le M t\sup_{0\le s\le t}\|F_{i,n}(s)\|_{L^p(D)} < \infty.
\]
For the second term, fix $\varepsilon\in(0,1)$. We write
\[
A_iS_i(t-s)=(-A_i)^{1-\frac{\varepsilon}{2}}S_i(t-s)(-A_i)^{\frac{\varepsilon}{2}}.
\]
Hence, by Theorem 6.13(c) of \cite{pazy2012semigroups}, there exists $C_{\varepsilon}>0$ such that
\[
\|(-A_i)^{1-\frac{\varepsilon}{2}}S_i(t-s)\|_{\mathcal L(L^p(D))}
\le C_{\varepsilon} (t-s)^{-(1-\frac{\varepsilon}{2})}.
\]
Consequently,
\begin{align*}
\int_0^t \|A_i S_i(t-s)F_{i,n}(s)\|_{L^p(D)}\,ds
&\le \int_0^t \|(-A_i)^{1-\frac{\varepsilon}{2}}S_i(t-s)\|_{\mathcal L(L^p(D))}
   \,\|(-A_i)^{\frac{\varepsilon}{2}}F_{i,n}(s)\|_{L^p(D)}\,ds \\
&\le C_{\varepsilon}\int_0^t (t-s)^{-(1-\frac{\varepsilon}{2})}\,
   \|(-A_i)^{\frac{\varepsilon}{2}}F_{i,n}(s)\|_{L^p(D)}\,ds\\
&\leq C_{\varepsilon}\int_0^t (t-s)^{-\frac{2-\varepsilon}{2}}||F_{i,n}(s)||_{W^{\varepsilon,p}(D)}ds\\
&\leq C_{\varepsilon,t} \sup_{s\leq t} ||F_{i,n}(s)||_{W^{\varepsilon,p}(D)}ds < \infty.
\end{align*}
Therefore, $v_{i,n}(t) \in W^{2,p}(D)$ for any $t\in (0,T]$. Moreover, since $v_{i,n}$ satisfies
\[
\partial_t v_{i,n}(t,x)= d_i\Delta v_{i,n}(t,x) + f_{i,n}(u_n(t,x))
\]
weakly $(t,x)$ and $\Delta v_{i,n}\in L^p((0,T]\times D)$ and $f_{i,n}(u_n)\in L^p( (0,T]\times D)$, we deduce that $\partial_t v_{i,n} \in L^p((0,T)\times D)$. Thus, $v_{i,n}$ as defined in \eqref{eq: v_i,n definition} is a regular function.
\section{Main results}\label{section: Main results}
Consider the initial $m\times r$ system \eqref{eq: initial mxr system} along with the truncated global mild solution \eqref{eq: truncated global solution mxr}. As we mentioned in the Introduction, our goal is to prove the uniform in $n$ estimate \eqref{eq: uniform bound}. In this section, we first prove global existence for the $2\times r$ system. Then, we prove global existence for the $m\times r$ system by induction. We record here two elementary properties of the heat kernel $G(t,x,y)$. The first one is
\begin{equation}\label{eq: int of heat kernel <= 1}
    0\leq \int_D G(t,x,y) dy \leq 1.
\end{equation}
The second one is a direct consequence of H\"older's inequality conjoined with Assumption \ref{assump: heat kernel singularity} and reads as
\begin{equation}\label{eq: Heat kernel bound G^p/p-1}
\int_D G^{\frac{p}{p-1}}(t,x,y)dy \leq ||G||_{L^{\infty}}^{\frac{1}{p-1}}||G||_{L^1}\leq Ct^{-\frac{d}{2(p-1)}}.
\end{equation}
\subsection{Global existence for the $2\times r$ system}
Let $u = (u_1,u_2)$ and consider the system
\begin{equation}\label{eq: 2xr system}
\begin{cases}
    \partial_t u_1(t,x) - d_1\Delta u_1(t,x) = f_1(u(t,x)) + \sum\limits_{k=1}^r \sigma_{1k}(u(t,x))\dot{W}_k(t,x),\\
    \partial_t u_2(t,x) - d_2\Delta u_2(t,x) = f_2(u(t,x)) + \sum\limits_{k=1}^r \sigma_{2k}(u(t,x))\dot{W}_k(t,x),\\
    u_i = 0 \text{ or } \dfrac{\partial u_i}{\partial n} = 0 \text{ on } (0,T)\times \partial D, i\in \{1,2\},\\
    u_i(0,x) = u_{i0}(x) \text{ in }  D, i\in \{1,2\},
\end{cases}
\end{equation}
along with the truncated global solution \eqref{eq: truncated global solution mxr}. We write $u_n(t,x) = (u_{1,n}(t,x),u_{2,n}(t,x))$. As a preliminary step, we show that we can obtain uniform-in-$n$ bounds on the $L^p(Q_T)$ norms of $u_{i,n}$ for a sufficiently small time $T$. Then, we will use Lemma \ref{lemma: lift L^p integral to L^infinity} to lift the $L^p$ norms into $L^{\infty}$.
\begin{lemma}\label{lemma: For every p there exists a T_p, C_p such that bound for small times}
For any $p\geq 1$ there exists a time $T_p$ and a constant $C_p$, both of which depend on $p$, such that
\[
\mathbb E||u_{i,n}||^p_{L^p(Q_{T_p})}\leq C_{p}\left(\sum_{i=1}^2 ||u_{i0}||^p_{L^{\infty}(D)}+1\right),
\]
for any $i=1,2$, uniformly in $n\in \mathbb N$.
\end{lemma}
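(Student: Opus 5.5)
We split $u_{i,n}=v_{i,n}+Z_{i,n}$ as in \eqref{eq: v_i,n definition} and \eqref{eq: Z_i,n}, and run Pierre's duality argument (Lemma \ref{lemma: Duality lemma}) on $v_{i,n}$. The noise part $Z_{i,n}$ is controlled through \eqref{eq: Lp norm Z_i}; the small time $T_p$ is then chosen to absorb its contribution.

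\textbf{Step 1: duality for $v_{1,n}$.} By the first row of \eqref{eq: triangular structure} and \eqref{eq: sys v_i,n},
\[
\partial_t v_{1,n}-d_1\Delta v_{1,n} = f_{1,n}(u_n)\le C_1(1+u_{1,n}+u_{2,n}).
\]
The truncation $f_n$ evaluates $f$ at a radially rescaled nonnegative point. Up to harmless modification, it preserves \eqref{eq: triangular structure} with $u_{i,n}$ replaced by $\min(u_{i,n},n)\le u_{i,n}$, and $u_{i,n}\ge0$ by Corollary \ref{corollary: nonnegativity of u_i,n}. Writing $u_{i,n}=v_{i,n}+Z_{i,n}$, we obtain
\[
(\partial_t-d_1\Delta - C)v_{1,n}\le C v_{2,n}+H, \qquad H:=C(1+|Z_{1,n}|+|Z_{2,n}|)\ge0 .
\]
The term $Cv_{2,n}$ has no $\Delta v_{2,n}$ on the right, so it is cleaner to apply Lemma \ref{lemma: Duality lemma} to $v_{1,n}$ alone. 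Concretely, the comparison function is $\theta_2=0$, $d_2\to$ absent, and $v_{2,n}$ enters only through its $L^p(Q_t)$ norm.

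For $v_{1,n}+v_{2,n}$, the second row of \eqref{eq: triangular structure} gives
\[
(\partial_t-d_1\Delta)(v_{1,n}+v_{2,n})\le (d_2-d_1)\Delta v_{2,n}+C(1+u_{1,n}+u_{2,n}).
\]
This is exactly of the form \eqref{heat equations inequality} with $v_1=v_{1,n}+v_{2,n}$ and the right-hand side written in terms of $v_{2,n}$. After rearranging, we take $\theta_2=1$ and $d_2$ as given, and feed the linear terms $C(v_{1,n}+v_{2,n})$ into the $\theta_1$ slot.

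\textbf{Step 2: positive parts to full norms.} Lemma \ref{lemma: Duality lemma} then gives, for $t\le T$,
\[
\|(v_{1,n}+v_{2,n})^+\|_{L^p(Q_t)}\le C\Big(\|v_{2,n}\|_{L^p(Q_t)}+\sum_i\|u_{i0}\|_{L^\infty}+\int_0^t\|H(s)\|_{L^p(D)}\,ds\Big),
\]
and similarly for $v_{1,n}^+$ from the first row. Since $u_{i,n}\ge0$, we have $v_{i,n}\ge -Z_{i,n}$, so $|v_{i,n}|\le v_{i,n}^+ + |Z_{i,n}|$. Hence $\|v_{1,n}+v_{2,n}\|$, $\|v_{1,n}\|$, and therefore $\|v_{2,n}\|$ are bounded by the positive parts plus $\|Z\|$ norms.

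\textbf{Main obstacle: the circular term $\|v_{2,n}\|_{L^p(Q_t)}$.} This term appears on the right-hand side and has to be absorbed. I would first try to control $v_{2,n}^+$ via the second-row estimate, which involves $v_{2,n}$ only with the diffusion-difference structure. In Pierre's argument the duality constant multiplies $\|v_{2,n}\|$. To close, use that $v_{2,n}\le (v_{1,n}+v_{2,n})+|Z_{1,n}|$, since $v_{1,n}\ge -Z_{1,n}$, and derive a bound for $v_{1,n}+v_{2,n}$ whose right side involves $v_{2,n}$ only through a time integral. This is done by redoing the duality estimate with $t$-dependence, giving a constant of the form $Ct^{\gamma}$, or by a Gronwall-type iteration in $t$.

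Failing that, one falls back on the crude route. Raise everything to the $p$-th power, take expectations, and use $\int_0^t\|H\|_{L^p(D)}\,ds\le t^{(p-1)/p}\|H\|_{L^p(Q_t)}$. Then apply \eqref{eq: Lp norm Z_i}, which gives $\mathbb E\|Z_{i,n}\|^p_{L^p(Q_t)}\le C t^{a+1}(t+\mathbb EY_n(t))$. This yields
\[
\mathbb E Y_n(t)\le C_p\Big(\sum_i\|u_{i0}\|^p_{L^\infty}+1\Big)+C_p t^{\min(a+1,p-1)}\,\mathbb E Y_n(t).
\]
Since $Y_n(t)<\infty$ for each fixed $n$ by \eqref{eq: pth moment bound}, choosing $T_p$ with $C_pT_p^{\min(a+1,p-1)}\le\frac12$ absorbs the last term. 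This gives the claim uniformly in $n$, with $p$ taken large enough that $a>0$; smaller $p$ then follows by H\"older's inequality on the bounded set $Q_{T_p}$.
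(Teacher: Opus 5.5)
Your proposal has a genuine gap, and it sits exactly at the step you flag as the main obstacle.

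You apply $\partial_t-d_1\Delta$ to $w:=v_{1,n}+v_{2,n}$, which leaves $(d_2-d_1)\Delta v_{2,n}$ on the right. Duality then bounds $\|w^+\|_{L^p(Q_t)}$ by $C\|v_{2,n}\|_{L^p(Q_t)}+\cdots$, where $C$ is $|d_2-d_1|$ times the maximal-regularity constant for $\Delta\phi$ in the dual problem $-\partial_t\phi-d_1\Delta\phi=\Theta$. That constant is invariant under parabolic scaling and does not tend to $0$ as $t\to0$. So redoing the duality estimate cannot produce a factor $Ct^{\gamma}$. The term is also not a time integral, so there is nothing for Gronwall to act on. Writing $v_{2,n}=w-v_{1,n}$ only yields $\|w^+\|\le C\|w^+\|+\cdots$, with $C$ in general not below $1$; only for $d_1,d_2$ sufficiently close could it be absorbed.

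Your crude route does not repair this. The displayed inequality for $\mathbb E Y_n(t)$ silently drops the term $C\|v_{2,n}\|^p_{L^p(Q_t)}$. After $\|v_{2,n}\|^p\le C(\|u_{2,n}\|^p+\|Z_{2,n}\|^p)$, that term contributes $C_pY_n(t)$ with no small prefactor, so no choice of $T_p$ absorbs it. Step 1 already goes wrong in the same direction: substituting $u_{2,n}=v_{2,n}+Z_{2,n}$ into the first row needlessly brings $v_{2,n}$ into the estimate for $v_{1,n}$.

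The missing idea is to keep the estimates triangular, bounding each component only through components already controlled.

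In the first row, keep $H=|C_1|(1+u_{1,n}+u_{2,n})$. The $u$-terms then enter only through $\int_0^t\|H(s)\|_{L^p(D)}\,ds\le Ct+Ct^{(p-1)/p}Y_n(t)^{1/p}$. Together with the negative-part bound, this controls $\|v_{1,n}\|_{L^p(Q_t)}$ by $\|u_{10}\|_{L^p(D)}$, $\|Z_{1,n}\|_{L^p(Q_t)}$ and that integral.

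For the second row, write the sum as
\[
(\partial_t-d_2\Delta)v_{2,n}\le -(\partial_t-d_1\Delta)v_{1,n}+|C_2|(1+u_{1,n}+u_{2,n})
\]
and apply Lemma \ref{lemma: Duality lemma} with unknown $v_{2,n}$, comparison function $-v_{1,n}$ and $\theta_2=1$. Equivalently, apply $\partial_t-d_2\Delta$ rather than $\partial_t-d_1\Delta$ to the sum, so the leftover is $(d_1-d_2)\Delta v_{1,n}$. The right side then involves only $\|v_{1,n}\|_{L^p(Q_t)}$, which is already bounded, plus initial data. After H\"older in time and \eqref{eq: Lp norm Z_i}, you reach precisely the absorbable inequality you wrote; this is the paper's argument.

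The remaining ingredients of your plan are sound and match the paper:
\begin{itemize}
\item negative parts bounded by $\|Z_{i,n}\|_{L^p(Q_t)}$;
\item finiteness of $Y_n(t)$ for fixed $n$ to justify the absorption;
\item $p$ taken large, with smaller $p$ recovered by H\"older on $Q_{T_p}$.
\end{itemize}
The last two are details the paper leaves implicit.
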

\begin{proof}
    In view of our Introduction, system \eqref{eq: 2xr system} admits a \textit{truncated} global mild solution $(u_{1,n}(t,x),u_{2,n}(t,x))$ given by
\begin{align*}
\begin{split}
    u_{i,n}(t,x) = \int_D G_i(t,x,y)u_0(y)dy &+ \int_0^t\int_D G_i(t-s,x,y)f_{i,n}(u_n(s,y))dyds\\
    &+\sum_{k=1}^r\int_0^t\int_D G_i(t-s,x,y)\sigma_{ik,n}(u_n(s,y))W_k(dyds).
\end{split}
\end{align*}
Set
\begin{equation}\label{eq: Y_n(t) definition}
Y_n(t) := ||u_{1,n}||_{L^p(Q_t)}^p+||u_{2,n}||_{L^p(Q_t)}^p = \sum_{i=1}^2 ||u_{i,n}||_{L^p(Q_t)}^p.\end{equation}
We wish to prove that there exists a time $T_p$ such that for all $n\in \mathbb N$, $\mathbb EY_n(T_p)\leq C$, for a constant $C$ that does not depend on $n$.

Let $Z_{i,n}$ be the stochastic convolution
\begin{equation}\label{eq: truncated stochastic conv}
Z_{i,n}(t,x) = \sum_{k=1}^r \int_0^t\int_{D} G_{i}(t-s,x,y)\sigma_{ik,n}(u_n(s,y))W_k(dyds),
\end{equation}
and set 
\[
v_{i,n}(t,x) := u_{i,n}(t,x) - Z_{i,n}(t,x).
\]
Then, $v_{i,n}$ is given by the integral equation
\begin{equation}\label{eq: v_{i,n} mild form}
v_{i,n}(t,x) = \int_D G_{i}(t,x,y)u_{i0}(y)dy + \int_0^t\int_{D} G_{i}(t-s,x,y)f_{i,n}(u_n(s,y))dyds,
\end{equation}
meaning that $v_{1,n}(t,x)$ and $v_{2,n}(t,x)$ solve the system of partial differential equations
\begin{equation}\label{eq: 2x2 system v_i,n}
\begin{cases}
    \partial_t v_{1,n}(t,x)- d_1\Delta v_{1,n}(t,x) = f_{1,n}(u_n(t,x)),\\
    \partial_t v_{2,n}(t,x) - d_2\Delta v_{2,n}(t,x) = f_{2,n}(u_n(t,x)),\\
    v_{i,n}(0,x) = u_{i0}(x).
\end{cases}
\end{equation}
Assumption \ref{assump: triangular mass control f} (triangular mass control) in this two dimensional case reduces to
\begin{align}
    &f_{1,n}(a_1,a_2) \leq C_1(1+a_1+a_2), \label{eq: f_1(u) 2xr}\\
    &f_{1,n}(a_1,a_2)+f_{2,n}(a_1,a_2) \leq C_2(1+a_1+a_2),\label{eq: f_1(u)+f_2(u) 2xr}
\end{align}
for any $a_1,a_2\geq 0$. By Corollary \ref{corollary: nonnegativity of u_i,n}, $u_{i,n}(t,x)$ are nonnegative a.e. Therefore, applying \eqref{eq: f_1(u) 2xr} to the first equation of \eqref{eq: 2x2 system v_i,n} gives
\[
\partial_t v_{1,n}(t,x)- d_1\Delta v_{1,n}(t,x) \leq C_1(1+u_{1,n}(t,x)+u_{2,n}(t,x)).
\]
Now we are allowed to apply Lemma \ref{lemma: Duality lemma} with the choices $v_1=v_{1,n}(t,x),\theta_i = 0,v_2=0,$ and $H(t,x)=|C_1|(1+u_{1,n}(t,x)+u_{2,n}(t,x)).$
Indeed, notice that $H$ is nonnegative by Corollary \ref{corollary: nonnegativity of u_i,n}. It is also in $L^p(Q_T)$ by the bound \eqref{eq: pth moment bound}, since the $L^p(Q_T)$ norm is always controlled above by the $L^\infty(Q_T)$ norm on bounded domains. Therefore,
\begin{align*}
||v_{1,n}^{+}||_{L^p(Q_t)} &\leq C_1\left(||v_{1,n}(0)||_{L^p(D)}+ \int_0^t ||H(s)||_{L^p(D)}ds\right)\\
&\leq C_1\left(||v_{1,n}(0)||_{L^p(D)}+ \int_0^t (1+||u_{1,n}(s)||_{L^p(D)}+||u_{2,n}(s)||_{L^p(D)})ds\right),
\end{align*}
for a constant $C_1$ that depends only on $p,d_1$, and $d_2$. Our assumptions guarantee (see Lemma \ref{lemma: non negativity of solutions}) that $u_{i,n}$ is nonnegative, but not that $v_{i,n}$ is nonnegative. However, the nonnegativity of $u_{i,n}$ enables us to bound the negative part of $v_{i,n}$ above by the stochastic convolution,
\begin{equation}\label{eq: how to bound negative part}
||v_{1,n}^{-}||_{L^p(Q_t)} = ||(u_{1,n}-Z_{1,n})^{-
}||_{L^p(Q_t)} =||(Z_{1,n}-u_{1,n})^{+}||_{L^p(Q_t)}\leq ||Z_{1,n}^{+}||_{L^p(Q_t)}.
\end{equation}
Therefore, we deduce that
\begin{equation}\label{eq: bound negative part 2xr}
    ||v_{1,n}^{-}||_{L^p(Q_t)}\leq||Z_{1,n}||_{L^p(Q_t)}.
\end{equation}
And by the triangle inequality
\begin{equation}\label{eq: ||v_1n|| 2xr}
||v_{1,n}||_{L^p(Q_t)}\leq C_{1}\Bigg(||v_{1,n}(0)||_{L^p(D)}+||Z_{1,n}||_{L^p(Q_t)}+t+\int_0^t (||u_{1,n}(s)||_{L^p(D)}+||u_{2,n}(s)||_{L^p(D)})ds\Bigg).
\end{equation}
Similarly, adding the two equations of \eqref{eq: 2x2 system v_i,n} together and applying \eqref{eq: f_1(u)+f_2(u) 2xr} gives
\[
\partial_t v_{2,n}(t,x) - d_2\Delta v_{2,n}(t,x) \leq \partial_t (-v_{1,n}(t,x))-d_1\Delta (-v_{1,n}(t,x)) + C_2(1+u_{1,n}(t,x)+u_{2,n}(t,x)).
\]
Thus, by Lemma \ref{lemma: Duality lemma} with $H(t,x)=|C_2|(1+u_{1,n}(t,x)+u_{2,n}(t,x))$
\begin{align*}
||v_{2,n}^{+}||_{L^p(Q_t)} &\leq C_{2}\Bigg( ||v_{1,n}(0)||_{L^p(D)}+||v_{2,n}(0)||_{L^p(D)}+ ||v_{1,n}||_{L^p(Q_t)}\\
&\qquad\qquad + \int_0^t (1+||u_{1,n}(s)||_{L^p(D)}+||u_{2,n}(s)||_{L^p(D)})ds\Bigg).
\end{align*}
The negative part of $v_{2,n}(t,x)$ is similarly bounded above by the stochastic convolution,
\[
||v_{2,n}^{-}||_{L^p(Q_t)} \leq ||Z_{2,n}||_{L^p(Q_t)}.
\]
Therefore, by the triangle inequality,
\begin{equation}\label{eq: v_2,n bound}
\begin{aligned}
||v_{2,n}||_{L^p(Q_t)}\leq C_{2}\Bigg(\sum_{i=1}^2 &||v_{i,n}(0)||_{L^p(D)}+||v_{1,n}||_{L^p(Q_t)}+||Z_{2,n}||_{L^p(Q_t)}+t\\
&+\int_0^t (||u_{1,n}(s)||_{L^p(D)}+||u_{2,n}(s)||_{L^p(D)})ds\Bigg),
\end{aligned}
\end{equation}
and using the estimate \eqref{eq: ||v_1n|| 2xr} for $||v_{1,n}||_{L^p(Q_t)}$,
\begin{equation}\label{eq: ||v_2,n|| 2xr}
\begin{aligned}
||v_{2,n}||_{L^p(Q_t)}\leq C_2\Bigg( \sum_{i=1}^2 &||v_{i,n}(0)||_{L^p(D)}+\sum_{i=1}^2 ||Z_{i,n}||_{L^p(Q_t)}+t\\
&+\int_0^t (||u_{1,n}(s)||_{L^p(D)}+||u_{2,n}(s)||_{L^p(D)})ds\Bigg).
\end{aligned}
\end{equation}
Applying H\"older's inequality to the integral and taking the $p$-th power gives
\begin{align}
||v_{1,n}||^p_{L^p(Q_t)} &\leq C_{p}\Bigg( ||v_{1,n}(0)||^p_{L^p(D)}+||Z_{1,n}||^p_{L^p(Q_t)}+t^p\notag\\
&\qquad\qquad+t^{p-1}\int_0^t (||u_{1,n}(s)||^p_{L^p(D)}+||u_{2,n}(s)||^p_{L^p(D)})ds\Bigg)\notag\\
&=C_{p}\Big( ||v_{1,n}(0)||^p_{L^p(D)}+||Z_{1,n}||^p_{L^p(Q_t)}+t^p\notag\\
&\qquad\qquad+t^{p-1}(||u_{1,n}||_{L^p(Q_t)}^p+||u_{2,n}||_{L^p(Q_t)}^p)\Big)\label{eq: b||v 1n||},
\end{align}
and
\begin{align}
||v_{2,n}||^p_{L^p(Q_t)} &\leq C_{p,T}\Bigg(\sum_{i=1}^2 ||v_{i,n}(0)||^p_{L^p(D)}+\sum_{i=1}^2||Z_{i,n}||_{L^p(Q_t)}^p+t^p\notag\\
&\qquad\qquad+t^{p-1}\int_0^t (||u_{1,n}(s)||^p_{L^p(D)}+||u_{2,n}(s)||^p_{L^p(D)})ds\Bigg)\notag\\
&= C_{p,T}\Bigg(\sum_{i=1}^2 ||v_{i,n}(0)||^p_{L^p(D)}+\sum_{i=1}^2||Z_{i,n}||_{L^p(Q_t)}^p+t^p\notag\\
&\qquad\qquad+t^{p-1}(||u_{1,n}||_{L^p(Q_t)}^p+||u_{2,n}||_{L^p(Q_t)}^p)\Bigg).\label{eq: b||v_2n||}
\end{align}
But now, recall that
\[
u_{i,n}(t,x) = v_{i,n}(t,x) + Z_{i,n}(t,x),
\]
from which it follows that
\begin{equation}\label{eq: b||u_in||}
||u_{i,n}||_{L^p(Q_t)}^p\leq C_p\left(||v_{i,n}||^p_{L^p(Q_t)}+||Z_{i,n}||^p_{L^p(Q_t)}\right).
\end{equation}
If we set 
\begin{equation}\label{eq: Y 2xr}
Y_n(t) := ||u_{1,n}||_{L^p(Q_t)}^p+||u_{2,n}||_{L^p(Q_t)}^p = \sum_{i=1}^2 ||u_{i,n}||_{L^p(Q_t)}^p,
\end{equation}
and insert \eqref{eq: b||v 1n||}, \eqref{eq: b||v_2n||}, and \eqref{eq: b||u_in||} into \eqref{eq: Y 2xr},  we get
\begin{equation}\label{eq: Ineq for Y(t)}
Y_n(t) \leq C_{p}\left(\sum_{i=1}^2 ||v_{i,n}(0)||_{L^p(D)}^p+\sum_{i=1}^2 ||Z_{i,n}||_{L^p(Q_t)}^p+t^p+t^{p-1}Y_n(t)\right).
\end{equation}
Then, taking expectations and applying \eqref{eq: Lp norm Z_i} gives
\[
\mathbb E Y_n(t)
\le
C_{p}\left(
\mathbb E \sum_{i=1}^2 ||v_{i,n}(0)||^p_{L^p(D)}+t^{a+1}\bigl(t+\mathbb E Y_n(t)\bigr)+t^p+t^{p-1}\mathbb E Y_n(t)
\right).
\]
By a mere rearrangement of terms, we deduce the inequality
\begin{equation}\label{eq: E Y_n(t) bound ineq}
    (1-C_{p}(t^{a+1}+t^{p-1}))\mathbb EY_n(t)\leq C_{p}\Bigg(\mathbb E \sum_{i=1}^2 ||v_{i,n}(0)||_{L^p(D)}^p+t^{a+2}+t^p\Bigg).
\end{equation}
Since $a+1>0$ and $p-1>0$, the term $
C_{p}(t^{a+1}+t^{p-1})$ decreases to $0$
as $t\rightarrow 0$. Therefore, there exists a time $T_p>0$ such that $C_{p}(t^{a+1}+t^{p-1}) < \frac12$ for all $t\in [0,T_p]$. Moreover, the time $T_p$ depends on $p$ but does not depend on the initial data, since the constant $C_{p}$ does not itself depend on the initial data. Therefore, we may solve \eqref{eq: E Y_n(t) bound ineq} on $[0,T_p]$ and recall that $v_{i,n}(0,\cdot) = u_{i0}(\cdot)$ to get
\begin{equation}\label{eq: EY_n(T_0) upper bound}
\mathbb E Y_n(T_p) \leq C_{p,T_p}\Bigg(\sum\limits_{i=1}^2 ||u_{i0}||_{L^{\infty}(D)}^p+1\Bigg).
\end{equation}
Therefore, on $[0,T_p]$ we obtain the bounds
\begin{equation*}
    \mathbb E ||u_{i,n}||_{L^p(Q_{T_p)}}^p \leq C_{p,T_p}\Bigg(\sum\limits_{i=1}^2 ||u_{i0}||_{L^p(D)}^p+1\Bigg),
\end{equation*}
for any $i=1,2$, uniformly in $n\in \mathbb N$.
\end{proof}
Now we show that for any $p$ sufficiently large, we can lift the Lebesgue integral of $v_{i,n}$ from $L^p(Q_T)$ to $L^{\infty}(Q_T)$.
\begin{lemma}\label{lemma: lift L^p integral to L^infinity}
   Let $p>1+d/2$ and $T>0$. If $f_i\in L^p(Q_T)$ then 
   \[
   \int_0^t\int_D G_i(t-s,x,y)f_i(s,y)dyds\in L^{\infty}(Q_T),
   \]
and in particular
   \[
\left\|
\int_0^\cdot \int_D G_i(\,\cdot - s,\bullet,y)\, f_i(s,y)\, dy\, ds
\right\|_{L^\infty(Q_T)}
\leq
C_{p,i}\, T^{\frac{p-1-d/2}{p}}
\|f_i\|_{L^p(Q_T)}.
\]
\end{lemma}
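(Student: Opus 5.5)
The plan is a pointwise H\"older estimate in $(s,y)$ using the heat kernel bound \eqref{eq: Heat kernel bound G^p/p-1}. Fix $(t,x)\in Q_T$ and let $q=\frac{p}{p-1}$ denote the conjugate exponent. H\"older's inequality on $Q_t$ gives
\[
\left|\int_0^t\int_D G_i(t-s,x,y)f_i(s,y)\,dy\,ds\right|
\le \left(\int_0^t\int_D G_i^{q}(t-s,x,y)\,dy\,ds\right)^{\frac{1}{q}}\|f_i\|_{L^p(Q_t)} .
\]
Since $\|f_i\|_{L^p(Q_t)}\le \|f_i\|_{L^p(Q_T)}$, it suffices to bound the first factor uniformly in $(t,x)$.

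For the kernel factor, I would use \eqref{eq: Heat kernel bound G^p/p-1}. It follows from Assumption \ref{assump: heat kernel singularity} and \eqref{eq: int of heat kernel <= 1} via $G^{q}\le \|G\|_\infty^{q-1}G$, and it gives $\int_D G_i^{q}(r,x,y)\,dy\le C r^{-\frac{d}{2(p-1)}}$ uniformly in $x$. Integrating in $r=t-s$ over $(0,t)$ converges precisely when $\frac{d}{2(p-1)}<1$, that is, when $p>1+d/2$. This is where the hypothesis on $p$ enters. The integral equals $C t^{1-\frac{d}{2(p-1)}}=C t^{\frac{p-1-d/2}{p-1}}$.

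Raising this to the power $1/q=\frac{p-1}{p}$ gives $C t^{\frac{p-1-d/2}{p}}$. This is at most $C T^{\frac{p-1-d/2}{p}}$ because the exponent is positive. Taking the supremum over $(t,x)\in Q_T$ then yields both the $L^\infty(Q_T)$ membership and the stated estimate with $C_{p,i}=C^{1/q}$, where $C$ comes from Assumption \ref{assump: heat kernel singularity} for the kernel $G_i$.

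I expect no serious obstacle; the only delicate point is the exponent bookkeeping. One must check that the $x$-uniformity of \eqref{eq: Heat kernel bound G^p/p-1} holds and that the time singularity is integrable exactly under $p>1+d/2$. Measurability issues are avoided because the bound is pointwise for every $(t,x)$, hence it also holds in the essential-supremum sense.
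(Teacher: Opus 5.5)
Your proposal is correct and matches the paper's proof essentially step for step: H\"older's inequality in $(s,y)$ with exponent $\frac{p}{p-1}$, the kernel bound $\int_D G_i^{p/(p-1)}(r,x,y)\,dy\le C r^{-\frac{d}{2(p-1)}}$, and integrability of $(t-s)^{-\frac{d}{2(p-1)}}$ exactly when $p>1+d/2$, which yields the factor $T^{\frac{p-1-d/2}{p}}$. One small correction to your bookkeeping: $C_{p,i}$ must also absorb the factor $\bigl(1-\frac{d}{2(p-1)}\bigr)^{-\frac{p-1}{p}}$ from the time integral, so it is not just the constant from the heat kernel singularity assumption.
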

\begin{proof}
We apply H\"older's inequality and \eqref{eq: Heat kernel bound G^p/p-1} to get
\begin{align*}
\left|\int_0^t\int_D G_i(t-s,x,y)f_{i}(s,y)dyds \right| &\leq \left(\int_0^t\int_D G_i^{\frac{p}{p-1}}(t-s,x,y)dyds\right)^{\frac{p-1}{p}}\\
&\times\left(\int_0^t\int_D |f_{i}(s,y)|^pdyds \right)^{\frac{1}{p}}\\
&\leq C_{p,i}\Big(\int_0^t (t-s)^{-\frac{d}{2(p-1)}}ds\Big)^{\frac{p-1}{p}}||f_{i}||_{L^p(Q_T)}\\
& \leq  C_{p,i}T^{\frac{p-1}{p}-\frac{d}{2p}}||f_{i}||_{L^p(Q_{t})},
\end{align*}
where $p$ has been chosen larger than $1+d/2$.
\end{proof}
Lemma \ref{lemma: For every p there exists a T_p, C_p such that bound for small times} allows us to bound the $L^p$ norm uniformly in $n$ for small times $T$, and Lemma \ref{lemma: lift L^p integral to L^infinity} upgrades the $L^p$ bounds to $L^{\infty}$ bounds. We now combine these two results to prove our first main result.
\begin{theorem}\label{thm: Theorem 2xr}
    Under Assumptions \ref{assump: initial data are >= 0 and in L^oo}-\ref{assump: Integrability of kernel L}, for any fixed time horizon $T>0$,
    \begin{equation*}
\sup_{n\in \mathbb N} \mathbb E \sup_{t\in [0,T]}\sup_{x\in D}\sup_{i=1,2} |u_{i,n}(t,x)|^p < \infty.
\end{equation*}
In particular, system \eqref{eq: 2xr system} admits a unique global mild solution.
\end{theorem}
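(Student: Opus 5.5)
Fix $T>0$ and a large $p$, with $p>1+d/2$ and large enough for Lemma \ref{lemma: sup norm Z}. The plan has four steps: get uniform-in-$n$ $L^p(Q_T)$ bounds on the whole interval $[0,T]$, turn the PDE inequalities into $L^\infty$ bounds on $v_{i,n}$, add the $L^\infty$ bound on $Z_{i,n}$ from \eqref{eq: sup norm Z_i}, and finish with Markov's inequality.

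\textbf{Step 1: from $[0,T_p]$ to $[0,T]$.} Lemma \ref{lemma: For every p there exists a T_p, C_p such that bound for small times} gives $\mathbb E Y_n(T_p)\le C_p(\sum_i\|u_{i0}\|^p_{L^\infty}+1)$ with $T_p$ independent of the initial data. I would restart at $T_p, 2T_p,\dots$, which needs about $T/T_p$ steps. The obstacle is that the restart requires a bound on $\mathbb E\|u_{i,n}(kT_p)\|^p_{L^p(D)}$, and possibly on $L^\infty$ norms, not merely a space-time integral. The cleanest way around this is to avoid restarting and instead redo the Lemma's estimate on all of $[0,T]$ in differential form. The duality bound keeps $\int_0^t\|u(s)\|_{L^p(D)}ds$ as an integral, and by \eqref{eq: Z_ik bond with g_ik|^p} the $Z$-term is $\mathbb E\|Z_{i,n}\|^p_{L^p(Q_t)}\le C_{p,T}\,\mathbb E\int_0^t(1+\sum_j\|u_{j,n}(s)\|^p_{L^p(D)})ds$.

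To make the $u$-terms Gronwall-usable I would replace the Lemma's crude bound $t^{p-1}Y_n(t)$ by an argument on the time-sliced quantity. Pierre's duality lemma also gives $L^p(Q_t)$ bounds for each $t\le T$, so with $\phi(t)=\mathbb E Y_n(t)$ one aims for $\phi(t)\le C_{p,T}(1+\|u_0\|^p_\infty)+C_{p,T}\int_0^t\phi(s)\,ds$. This is obtained by using Hölder as $(\int_0^t\|u(s)\|_{L^p}ds)^p\le t^{p-1}\int_0^t\|u(s)\|^p_{L^p}ds$ and noting that $\int_0^t\|u(s)\|^p_{L^p}ds=Y_n(t)$ is itself the Gronwall variable with derivative structure. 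More carefully, I would work with $\psi(t)=\mathbb E\sup_{s\le t}\|u_n(s)\|^p_{L^p(D)}$ if needed.

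If this Gronwall closure fails, the fallback is the restart scheme. At each time $kT_p$, the conditional bound from Lemma \ref{lemma: For every p there exists a T_p, C_p such that bound for small times} (which is linear in $\|u(kT_p)\|^p_{L^\infty}$) is combined with the $L^\infty$ lift below, and iterated a finite number of times. The linear dependence on the initial data is what prevents the constants from compounding beyond a geometric factor $C^{T/T_p}$.

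\textbf{Step 2: $L^p$ bounds to $L^\infty$ bounds on $v_{i,n}$.} Once $\sup_n\mathbb E Y_n(T)<\infty$ (for every large $p$), write $v_{1,n}\le S_1(t)u_{10}+\int G_1 C_1(1+u_{1,n}+u_{2,n})$. Lemma \ref{lemma: lift L^p integral to L^infinity} bounds the positive part in $L^\infty(Q_T)$ by $C(\|u_0\|_\infty+1+\|u_{1,n}\|_{L^p(Q_T)}+\|u_{2,n}\|_{L^p(Q_T)})$. Together with $u_{1,n}\ge0$ this gives $\sup u_{1,n}\le \sup v_{1,n}^+ +\sup|Z_{1,n}|$.

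\textbf{Step 3: bound on $u_{2,n}$.} Here $f_2$ is not bounded above, only $f_1+f_2$ is, so I would consider $w=v_{1,n}+v_{2,n}$ when $d_1=d_2$. For $d_1\neq d_2$ I would use $v_{2,n}$ directly: $\partial_t v_{2,n}-d_2\Delta v_{2,n}=f_{2,n}\le C_2(1+u_{1,n}+u_{2,n})-f_{1,n}$, and $-f_{1,n}$ is polynomially bounded by Assumption \ref{assump: Polynomial growth f}. Since $L^q$ bounds on $u_n$ hold for every $q$ by Step 1, $|f_{1,n}(u_n)|\le C(1+|u_n|^\mu)$ lies in $L^p(Q_T)$ with $\mathbb E\|f_{1,n}(u_n)\|^p_{L^p}\le C(1+\mathbb E Y^{(\mu p)}_n)$. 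Lemma \ref{lemma: lift L^p integral to L^infinity} then bounds $v_{2,n}^+$ in $L^\infty$.

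\textbf{Step 4: conclusion.} The stochastic parts are handled by \eqref{eq: sup norm Z_i}: $\mathbb E\sup|Z_{i,n}|^p\le C_{p,T}(1+\mathbb E Y_n(T))$. Combining, $\sup_n\mathbb E\sup_{Q_T}|u_{i,n}|^p<\infty$, and Markov's inequality as in the Introduction gives $\mathbb P(\tau_\infty=\infty)=1$. Uniqueness is inherited from the local solutions.

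I expect the main obstacle to be Step 1: making the estimate global in time uniformly in $n$, either through a Gronwall form of the duality estimate or through a controlled restart.
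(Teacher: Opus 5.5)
\textbf{Verdict.} Step 1, which you rightly flag as the crux, has a genuine gap: when $d_1\neq d_2$, neither of your two routes gives $\sup_n\mathbb E\|u_n\|^p_{L^p(Q_T)}<\infty$.

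\textbf{Why the Gronwall route fails.} After H\"older, the duality estimate produces $t^{p-1}\int_0^t\|u_n(s)\|^p_{L^p(D)}ds=t^{p-1}Y_n(t)$. The $Z$-term likewise produces $Ct^{a+1}(t+\mathbb E Y_n(t))$. So what you get is $\phi(t)\le A+B(t^{p-1}+t^{a+1})\phi(t)$, not $\phi(t)\le A+B\int_0^t\phi(s)\,ds$. This closes only for small $t$, which is exactly the small-time Lemma. Switching to $\psi(t)=\mathbb E\sup_{s\le t}\|u_n(s)\|^p_{L^p(D)}$ would require a pointwise-in-time $L^p(D)$ bound on $v_{2,n}$. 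Lemma \ref{lemma: Duality lemma} is only a space-time estimate. Pointwise-in-time bounds come from the comparison principle, which covers $v_{1,n}$, and $v_{1,n}+v_{2,n}$ only when $d_1=d_2$; so your plan does close in that case.

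\textbf{Why the restart fallback fails.} Your fallback is essentially the paper's Markov restart, and the linearity you invoke does not hold. The Lemma is linear in the restart data, but the $L^\infty$ lift of $u_{2,n}$ in your Step 3 uses $|f_{1,n}|\le C(1+|u_n|^\mu)$. So $\mathbb E\|u_n(t_{j+1})\|^p_{L^\infty}$ is controlled by $\mathbb E\|u_n\|^{\mu p}_{L^{\mu p}}$ on $[t_j,t_{j+1}]$, hence by $\mathbb E\|u_n(t_j)\|^{\mu p}_{L^\infty}$. That $L^{\mu p}$ bound is itself only known on windows of length $T_{\mu p}$. The exponents therefore climb $p\to\mu p\to\mu^2p\to\cdots$, while the window lengths $T_{\mu^jp}$ have no lower bound, so nothing guarantees you reach $T$. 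The paper does not supply this step either: its recursion $\mathbb E\sup_{[t_j,t_{j+1}]}|u_{i,n}|^p\le C(\mathbb E\|u_n(t_j)\|^p_{L^\infty}+1)$ drops the $\|u_{i0}\|^{\mu p}_{L^\infty(D)}$ term present in \eqref{eq: sup bound on [0,T0]}.

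\textbf{A fix.} Restart the $L^p$ estimate itself, at a well-chosen time slice, rather than restarting the $L^\infty$ bound.
\begin{itemize}
\item By \eqref{eq: E Y_n(t) bound ineq}, the Lemma depends on the starting data only through $\mathbb E\|u_n(s)\|^p_{L^p(D)}$, and $T_p$ does not depend on the data.
\item Since $\int_{T_p/2}^{T_p}\mathbb E\|u_n(s)\|^p_{L^p(D)}ds\le\mathbb E Y_n(T_p)$, you can pick $s_1\in[T_p/2,T_p]$ with $\mathbb E\|u_n(s_1)\|^p_{L^p(D)}\le\frac{2}{T_p}\mathbb E Y_n(T_p)$.
\item Rerun the Lemma's argument on $[s_1,s_1+T_p]$ with the stochastic convolution restarted at $s_1$. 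This uses only pathwise duality and the moment bound for $Z$; the Markov property is not needed.
\item Each step advances time by at least $T_p/2$, and the recursion is linear. So $\lceil 2T/T_p\rceil$ steps give $\sup_n\mathbb E\|u_n\|^p_{L^p(Q_T)}<\infty$ for every $p$.
\end{itemize}
With that in hand, your Steps 2--4 on all of $[0,T]$ go through as written. Step 3 can then simply use $|f_{2,n}(u_n)|\le C(1+|u_n|^\mu)$ together with the $L^{\mu p}(Q_T)$ bound.
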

\begin{proof}
Let $\mu>0$ be given by the polynomial growth of $f_i$ (Assumption \ref{assump: Polynomial growth f}). Lemma \ref{lemma: For every p there exists a T_p, C_p such that bound for small times} showed us that for any $\mu p\geq 1$, there exists a sufficiently small time $T_{\mu p}$ such that
\[
\mathbb E||u_{i,n}||^{\mu p}_{L^{\mu p}(Q_{T_{\mu p}})}\leq C_{p}\left(\sum_{i=1}^2 ||u_{i0}||^p_{L^{\infty}(D)}+1\right).
\]
Now we lift this $L^p$ bound into an $L^{\infty}$ bound. Recall by \eqref{eq: v_{i,n} mild form} that
\[
v_{i,n}(t,x) = \int_D G_i(t,x,y)u_{i0}(y)dy + \int_0^t\int_D G_i(t-s,x,y)f_{i,n}(u_n(s,y))dyds.
\]
By \eqref{eq: int of heat kernel <= 1}, the first term is bounded as
\begin{equation*}
    \Bigg |\int_D G_i(t,x,y)u_{i0}(y)dy \Bigg | \leq ||u_{i0}||_{L^\infty(D)}\int_D G_i(t,x,y)dy \leq ||u_{i0}||_{L^\infty(D)}.
\end{equation*}
 Therefore, we apply Lemma \ref{lemma: lift L^p integral to L^infinity} to get, for $\mu p>1+d/2$,
\begin{align*}
\sup_{t\in [0,T_{\mu p}]}\sup_{x\in D}\sup_{i=1,2} |v_{i,n}(t,x)| \leq C_{p,T_{\mu p}}\sup_{i=1,2} \Big(||u_{i0}||_{L^{\infty}(D)} + ||f_{i,n}(u_n)||_{L^p(Q_{T_{\mu p}})}\Big).
\end{align*}
By Assumption \ref{assump: Polynomial growth f} (polynomial growth of $f$), we get
\[
||f_{i,n}(u_n)||_{L^p(Q_{T_{\mu p}})}\leq C\Big(1 + ||u_{1,n}||^{\mu}_{L^{\mu p}(Q_{T_{\mu p}})}+||u_{2,n}||^{\mu}_{L^{\mu p}(Q_{T_{\mu p}})}\Big),
\]
and so by Lemma \ref{lemma: For every p there exists a T_p, C_p such that bound for small times}
\[
\mathbb E \sup_{t\in [0,T_{\mu p}]}\sup_{x\in D}\sup_{i=1,2}|v_{i,n}(t,x)|^p\leq C_{p,\mu, T_{\mu p}}\Bigg(1+\sum\limits_{i=1}^2 ||u_{i0}||_{L^{\infty}(D)}^{\mu p}+ \sup_{i=1,2}||u_{i0}||^p_{L^{\infty}(D)}\Bigg).
\]
For $p$ sufficiently large, we may rewrite this bound by absorbing constants into $C_{p,T_{\mu p}}$ as
\[
\mathbb E \sup_{t\in [0,T_{\mu p}]}\sup_{x\in D}\sup_{i=1,2}|v_{i,n}(t,x)|^p\leq C_{p,\mu,T_{\mu p}}\Bigg(1+\sup_{i=1,2} ||u_{i0}||^p_{L^{\infty}(D)}+\sup_{i=1,2}||u_{i0}||^{\mu p}_{L^{\infty}(D)}\Bigg).
\]
As for the stochastic convolution, we have by \eqref{eq: sup norm Z_i} that
\begin{align*}
\mathbb E \sup_{t\in [0,T_{\mu p}]}\sup_{x\in D} \sup_{i=1,2}|Z_{i,n}(t,x)|^p &\leq C_{p,T_{\mu p}}(1+\mathbb EY_n(T_{\mu p}))\leq C_{p,T_{\mu p}}\Big(1+\sup_{i=1,2}||u_{i0}||_{L^p(D)}^p\Big),
\end{align*}
where $Y_n(t)$ was defined in \eqref{eq: Y 2xr} and we used the uniform estimate \eqref{eq: EY_n(T_0) upper bound}. So, all in all, by invoking the definition of $u_{i,n}(t,x)$
\[
u_{i,n}(t,x) = v_{i,n}(t,x) + Z_{i,n}(t,x),
\]
we conclude that there exists a constant $C_{p,\mu,T_{\mu p}}>0$ such that
\begin{equation}\label{eq: sup bound on [0,T0]}
\mathbb E\sup_{t\in [0,T_{\mu p}]}\sup_{x\in D}\sup_{i=1,2} |u_{i,n}(t,x)|^p\leq C_{p,\mu,T_{\mu p}}\Bigg(1+\sup_{i=1,2} ||u_{i0}||^p_{L^{\infty}(D)}+\sup_{i=1,2}||u_{i0}||^{\mu p}_{L^{\infty}(D)}\Bigg),
\end{equation}
uniformly in $n\in \mathbb N$ for some $p$ sufficiently large.\\[1em]
\noindent 
We now extend this small-time estimate to intervals $[T_{\mu p}, 2T_{\mu p}]$,...,$[(N-1)T_{\mu p},NT_{\mu p}]$, where $N\in \mathbb N$. Since $\{u_{n}(t,\cdot)\}_{t\in [0,T]}$ is a Markov process (see, for instance, section 9.2 in \cite{da2014stochastic}, or \cite{cerrai2003stochastic} for an application of the Markov property that is similar to our case), 
we may repeat the above argument on the time interval $[jT_{\mu p},(j+1)T_{\mu p}]$. Then, due to the Markov property,
\begin{equation*}
\mathbb{E}\sup_{t\in [jT_{\mu p},(j+1)T_{\mu p}]}\sup_{x\in D}\sup_{i=1,2}
|u_{i,n}(t,x)|^p
\le
C_{p,\mu,T_{\mu p}}\Big(\mathbb{E}\sup_{i=1,2}\|u_{i,n}(t_j)\|_{L^\infty(D)}^p+1\Big),
\end{equation*}
where $t_j := jT_{\mu p}$. Using $||u_{i,n}(t_j)||^p_{L^{\infty}(D)}\leq \sup\limits_{t\in [t_{j-1},t_{j}]} ||u_n(t)||_{L^{\infty}(D)}^p$ and \eqref{eq: sup bound on [0,T0]} yields
\begin{align*}
\mathbb{E}\sup_{t\in [jT_{\mu p},(j+1)T_{\mu p}]}\sup_{x\in D}\sup_{i=1,2}
|u_{i,n}(t,x)|^p
&\le
C_{p,\mu,T_{\mu p}}\Big(\mathbb{E}\sup_{i=1,2}\|u_{i,n}(t_j)\|_{L^\infty(D)}^p+1\Big)\\
&\leq C_{p,\mu,T_{\mu p}}\Big(C_{p,\mu,T_{\mu p}}(\mathbb E\sup_{i=1,2} ||u_{i,n}(t_{j-1})||^p_{L^{\infty}(D)}+1)+1\Big)\\
&\vdots\\
&\leq C_{p,\mu,T_{\mu p}}^j\left(\sup_{i=1,2}||u_{i0}||_{L^{\infty}(D)}^p+1\right)+ C_{p,\mu,T_{\mu p}}^{j-1} + ... + C_{p,\mu,T_{\mu p}}.
\end{align*}
And since
\[
\sup_{t\in [0,NT_{\mu p}]}\sup_{x\in D}\sup_{i=1,2} |u_{i,n}(t,x)|^p =\max\limits_{j=1,...,N-1}\sup_{t\in [jT_{\mu p},(j+1)T_{\mu p}]}\sup_{x\in D}\sup_{i=1,2} |u_{i,n}(t,x)|^p,
\]
we finally get, for any $N\in \mathbb N$,
\[\mathbb{E}\sup_{t\in [0,ΝT_{\mu p}]}\sup_{x\in D}\sup_{i=1,2}
|u_{i,n}(t,x)|^p
\leq C_{p,\mu,T_{\mu p}}\left(\sup_{i=1,2}||u_{i0}||^p_{L^{\infty}(D)}+1\right),\]
where the constant $C_{p,\mu,T_{\mu p}}$ depends on $p,\mu$, and $T_{\mu p}$, but not on $n$. The proof is completed.
\end{proof}
\subsection{Global existence for the $m\times r$ system}
Now consider the $m\times r$ system:
\begin{equation}\label{eq: Theorem mxr system}
    \begin{cases}
        \partial_t u_i(t,x) - d_i\Delta u_i(t,x) = f_i(u(t,x))+\sum\limits_{k=1}^r \sigma_{ik}(u(t,x))\dot{W}_k(t,x)\text{ in } (0,T)\times D,\\
        u_i = 0 \text{ or } \dfrac{\partial u_i}{\partial n} = 0 \text{ on } (0,T)\times \partial D,\\
        u_i(0,x) = u_{i0}(x).
    \end{cases}
\end{equation}
We write $u_n(t,x) = (u_{1,n}(t,x),...,u_{m,n}(t,x))$ for the \textit{truncated} global mild solution, as defined in \eqref{eq: truncated global solution mxr}.
\begin{theorem}\label{thm: Theorem mxr}
    Under Assumptions \ref{assump: initial data are >= 0 and in L^oo}-\ref{assump: Integrability of kernel L}, for any fixed time horizon $T>0$,
    \begin{equation*}
\sup_{n\in \mathbb N} \mathbb E \sup_{t\in [0,T]}\sup_{x\in D}\sup_{i=1,...,m} |u_{i,n}(t,x)|^p < \infty.
\end{equation*}
In particular, system \eqref{eq: Theorem mxr system} has a unique global mild solution.
\end{theorem}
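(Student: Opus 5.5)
The plan is to follow the proof of Theorem \ref{thm: Theorem 2xr} verbatim. The only genuinely new step is the $m$-component analogue of Lemma \ref{lemma: For every p there exists a T_p, C_p such that bound for small times}, which I will prove by induction on the row index $j$ of the triangular structure \eqref{eq: triangular structure}.

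Set $v_{i,n}=u_{i,n}-Z_{i,n}$ as in \eqref{eq: v_i,n definition}; these are regular functions solving \eqref{eq: sys v_i,n}. Put $Y_n(t)=\sum_{i=1}^m\|u_{i,n}\|^p_{L^p(Q_t)}$. The inductive claim is that for each $j\in\{1,\dots,m\}$, with a constant $C_{p,j}$ independent of $n$ and of the initial data,
\begin{equation*}
\|v_{j,n}\|_{L^p(Q_t)}\le C_{p,j}\Big(\sum_{i=1}^{j}\|u_{i0}\|_{L^p(D)}+\sum_{i=1}^{j}\|Z_{i,n}\|_{L^p(Q_t)}+t+\int_0^t\sum_{i=1}^m\|u_{i,n}(s)\|_{L^p(D)}ds\Big).
\end{equation*}

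The base case $j=1$ is \eqref{eq: ||v_1n|| 2xr}, with $u_{2,n}$ replaced by the sum over all components. For the inductive step, add the first $j$ equations of \eqref{eq: sys v_i,n} and use the $j$-th row of \eqref{eq: triangular structure} together with Corollary \ref{corollary: nonnegativity of u_i,n}. This gives
\begin{equation*}
(\partial_t-d_j\Delta)v_{j,n}\le\sum_{i=1}^{j-1}(\partial_t-d_i\Delta)(-v_{i,n})+|C_j|\Big(1+\sum_{i=1}^m u_{i,n}\Big).
\end{equation*}
Applying the general form of Lemma \ref{lemma: Duality lemma} stated in Remark \ref{Remark beneath duality lemma} bounds $\|v_{j,n}^+\|_{L^p(Q_t)}$ by $\sum_{i<j}\|v_{i,n}\|_{L^p(Q_t)}$, the initial data, and the $H$-integral. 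The negative part is handled as in \eqref{eq: how to bound negative part}: since $u_{j,n}\ge0$, we have $\|v_{j,n}^-\|_{L^p(Q_t)}\le\|Z_{j,n}\|_{L^p(Q_t)}$. Substituting the inductive bounds for $\|v_{i,n}\|_{L^p(Q_t)}$, $i<j$, closes the induction. The constants grow with $j$, but since there are only $m$ steps this is harmless.

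Next, raise to the $p$-th power, apply Hölder to the time integral, and use $\|u_{i,n}\|^p\le C_p(\|v_{i,n}\|^p+\|Z_{i,n}\|^p)$. Summing over $i$ gives the analogue of \eqref{eq: Ineq for Y(t)}:
\begin{equation*}
Y_n(t)\le C_p\Big(\sum_{i=1}^m\|u_{i0}\|^p_{L^p(D)}+\sum_{i=1}^m\|Z_{i,n}\|^p_{L^p(Q_t)}+t^p+t^{p-1}Y_n(t)\Big).
\end{equation*}
Take expectations and use \eqref{eq: Lp norm Z_i}, which already holds for general $m$ via Assumption \ref{assump: Linear growth sigma}. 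Then choose $T_p$ so that $C_p(t^{a+1}+t^{p-1})<\tfrac12$ on $[0,T_p]$, which yields $\mathbb E Y_n(T_p)\le C_p(1+\sum_i\|u_{i0}\|^p_{L^\infty(D)})$ uniformly in $n$. The finiteness of $\mathbb EY_n(t)$ needed to absorb this term comes from \eqref{eq: pth moment bound} for fixed $n$.

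The remaining steps are identical to the proof of Theorem \ref{thm: Theorem 2xr}:
\begin{itemize}
\item Apply the previous step with exponent $\mu p$ and use polynomial growth (Assumption \ref{assump: Polynomial growth f}) to control $\|f_{i,n}(u_n)\|_{L^p(Q_{T_{\mu p}})}$.
\item Use Lemma \ref{lemma: lift L^p integral to L^infinity} with $p>1+d/2$ to bound $\mathbb E\sup|v_{i,n}|^p$ on $[0,T_{\mu p}]$.
\item Use \eqref{eq: sup norm Z_i} to bound $\mathbb E\sup|Z_{i,n}|^p$ there.
\item Iterate over the intervals $[jT_{\mu p},(j+1)T_{\mu p}]$ using the Markov property. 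This works because $T_{\mu p}$ does not depend on the initial data.
\end{itemize}
Uniqueness and global existence then follow from the Markov-inequality argument in the Introduction.

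I expect the main obstacle to be the inductive duality step. One must check that Remark \ref{Remark beneath duality lemma} applies with several distinct diffusion coefficients $d_1,\dots,d_{j-1}$ on the right-hand side, and that the initial data of $v_{i,n}$, which equal $u_{i0}\in L^\infty$, enter linearly, so that $T_p$ remains independent of the data. A further subtlety, as in the $2\times r$ case, is that the $L^\infty$ bound on the first interval involves $\|u_{i0}\|^{\mu p}$. For the Markov iteration one therefore needs the bound at $t_j$ to hold with the exponent $\mu p$ as well, so I would run the iteration with $\mathbb E\|u_n(t_j)\|_{L^\infty}^{\mu p}$-type moments, obtained by applying the small-time estimate with a larger $p$.
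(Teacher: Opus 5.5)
Your proposal follows the paper's proof: induction over the rows of \eqref{eq: triangular structure} via the multi-term duality estimate of Remark~\ref{Remark beneath duality lemma}, the bound $\|v_{j,n}^-\|_{L^p(Q_t)}\le\|Z_{j,n}\|_{L^p(Q_t)}$ from $u_{j,n}\ge0$, the $Y_n$ inequality with a data-independent $T_p$, and then the $2\times r$ endgame. The step that is not closed, and you are right to flag it, is the Markov iteration.

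The paper's own iteration puts $\mathbb E\|u_{i,n}(t_j)\|^p_{L^\infty(D)}$ on the right-hand side, whereas \eqref{eq: sup bound on [0,T0]} actually needs the $\mu p$-th power. So the paper has exactly this hole. Your remedy does not close it. A $q$-th sup-moment on an interval of length $T_{\mu q}$ costs the $\mu q$-th moment at the left endpoint. Going back $N$ intervals from $T$ therefore needs exponents $\mu p,\mu^2p,\dots,\mu^Np$, and it covers only total time $\sum_{k=1}^{N}T_{\mu^k p}$. Now $T_q$ is defined by $C_q(t^{a+1}+t^{q-1})<\tfrac12$. Here $C_q$ collects $q$-th powers of the duality constant and of the BDG-type constants behind Lemma~\ref{lemma: sup norm Z}, and both grow with $q$. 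So the argument gives no lower bound on $T_q$ uniform in $q$; one expects $T_q\to0$. The sum may then stay bounded, and for large $T$ the backward chain never reaches $t=0$.

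The way out is to iterate at the $L^p(Q)$ level, where the data enter only linearly, through $\sum_i\|u_{i0}\|^p_{L^p(D)}$. Keep that norm rather than passing to $L^\infty$. Rerun your small-time argument from a deterministic time $s$, with initial datum $u_n(s)$, which is nonnegative and, for fixed $n$, bounded. Writing $\|u_n\|^p:=\sum_i\|u_{i,n}\|^p$, this gives
$$\mathbb E\|u_n\|^p_{L^p((s,s+T_p)\times D)}\le C_p\bigl(1+\mathbb E\|u_n(s)\|^p_{L^p(D)}\bigr).$$
Set $I_k:=(kT_p/2,(k+1)T_p/2)$. Then $I_{k+1}\subset(s,s+T_p)$ for every $s\in I_k$, so averaging over $s\in I_k$ turns the pointwise-in-time quantity into a space-time norm:
\[
\mathbb E\|u_n\|^p_{L^p(I_{k+1}\times D)}\le C_p\Big(1+\frac{2}{T_p}\,\mathbb E\|u_n\|^p_{L^p(I_k\times D)}\Big).
\]
Finitely many steps give $\sup_n\mathbb E\|u_n\|^q_{L^q(Q_T)}<\infty$ for every $q$ and every $T$. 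Then apply Lemma~\ref{lemma: lift L^p integral to L^infinity} with a fixed exponent $>1+d/2$, together with Assumption~\ref{assump: Polynomial growth f} and \eqref{eq: sup norm Z_i}, once on all of $[0,T]$. No $L^\infty$ bound ever has to be iterated, so the $\mu p$ issue disappears.
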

\begin{proof}
Fix $p>1+d/2$. Set
    $$U(t,x) :=u_{1,n}(t,x)+...+u_{m,n}(t,x).$$
We consider the functions  
    $$v_{i,n}(t,x) = u_{i,n}(t,x) - Z_{i,n}(t,x)$$
    that solve the system of partial differential equations
\begin{align}\label{truncated v_i PDE system mxm}
\begin{split}
&\partial_t v_{1,n}(t,x) - d_1\Delta v_{1,n}(t,x) = f_{1,n}(u_n(t,x))\\
&\qquad\qquad\qquad\vdots\\
&\partial_t v_{m,n}(t,x) - d_m\Delta v_{m,n}(t,x) = f_{m,n}(u_n(t,x)).
\end{split}
\end{align}
We will prove a bound on $||v_{k,n}||_{L^p(Q_t)}$ by induction.
Fix an $i=1,...,m$ and consider the first $i$ equations of \eqref{truncated v_i PDE system mxm}
 \begin{equation}\label{eq: i eq}
    \begin{cases}
        (\partial_t - d_1\Delta)v_{1,n}(t,x) = f_{1,n}(u_n(t,x)),\\
        (\partial_t - d_2\Delta)v_{2,n}(t,x) = f_{2,n}(u_n(t,x)),\\
       \qquad\qquad\qquad \vdots\\
        (\partial_t - d_i\Delta)v_{i,n}(t,x) = f_{i,n}(u_n(t,x)).
    \end{cases}
    \end{equation}
Using the mass control of the first $i$ reaction terms $f_{1,n}(u_n(t,x))+...+f_{i,n}(u_n(t,x))$ (Assumption \ref{assump: triangular mass control f})
\[
f_{1,n}(u_n(t,x))+...+f_{i,n}(u_n(t,x)) \leq C(1+u_{1,n}(t,x) + ... + u_{i,n}(t,x)),
\]
and adding the equations in \eqref{eq: i eq} we obtain
    \[
    \sum_{j=1}^{i} (\partial_t -d_{j}\Delta)v_{j,n}(t,x) \leq C(1+U(t,x)), \hspace{0.1cm} \text{ for any } i\in \{1,...,m\},
    \]
    where $C=\max \{C_1,...,C_m\}$. Equivalently,
    \[
    (\partial_t - d_i\Delta )v_{i,n}(t,x) \leq -\sum_{j=1}^{i-1} (\partial_t - d_j\Delta)v_{j,n}(t,x) + C(1+U(t,x)). 
    \]
    Now set $H(t,x) = |C|(1+U(t,x))$. By Corollary \ref{corollary: nonnegativity of u_i,n}, $H(t,x)\geq 0$ and is also in $L^p(Q_t)$ by the bound \eqref{eq: pth moment bound}. Then, by Lemma \ref{lemma: Duality lemma}
    \[
    ||v_{i,n}^{+}||_{L^p(Q_t)}\leq C\left(\sum_{j=1}^{i} ||v_{j,n}(0)||_{L^p(D)}+\sum_{j=1}^{i-1} ||v_{j,n}||_{L^p(Q_t)} + \int_0^t ||H(s)||_{L^p(D)}ds\right).
    \]
   From the argument that gave us \eqref{eq: bound negative part 2xr}, we can control the negative part above by the stochastic convolution as
    \[
    ||v_{i,n}^{-}||_{L^p(Q_t)}\leq ||Z_{i,n}||.
    \]
    Therefore, for every $i=1,...,m$
    \begin{equation}\label{v_i before induction mxm system}
        ||v_{i,n}||_{L^p(Q_t)}\leq C\left(\sum_{j=1}^{i} ||v_{j,n}(0)||_{L^p(D)}+||Z_{i,n}||_{L^p(Q_t)}+\sum_{j=1}^{i-1}||v_{j,n}||_{L^p(Q_t)} + \int_0^t ||H(s)||_{L^p(D)}ds\right)
    \end{equation}
    We will prove by induction on $i$ that
    \begin{equation}\label{v_i after induction mxm system}
        ||v_{k,n}||_{L^p(Q_t)}\leq C\left(\sum_{j=1}^{k} ||v_{j,n}(0)||_{L^p(D)}+\sum_{j=1}^{k} ||Z_{j,n}||_{L^p(Q_t)}+\int_0^t ||H(s)||_{L^p(D)}ds\right).
    \end{equation}
    We have already shown in the proof of Theorem \ref{thm: Theorem 2xr} that \eqref{v_i after induction mxm system} holds for $i=1$. Indeed,  \eqref{eq: ||v_1n|| 2xr} reads as
    \[
    ||v_{1,n}||_{L^p(Q_t)} \leq C\left(||v_{1,n}(0)||_{L^p(D)}+||Z_{1,n}||_{L^p(Q_t)}+\int_0^t ||H(s)||_{L^p(D)}ds\right).
    \]
    Now, for the inductive step, assume that for any $k\leq i$,
    \[
    ||v_{k,n}||\leq C\left(\sum_{j=1}^{k} ||v_{j,n}(0)||_{L^p(D)}+\sum_{j=1}^{k} ||Z_{j,n}||_{L^p(Q_t)}+\int_0^t ||H(s)||_{L^p(D)}ds\right).
    \]
    Then, by \eqref{v_i before induction mxm system},
    \begin{align*}
        ||v_{i+1}||_{L^p(Q_t)}&\leq C\left(\sum_{j=1}^{i+1} ||v_{j,n}(0)||_{L^p(D)}+||Z_{i+1}||_{L^p(Q_t)} + \sum_{j=1}^i ||v_{j,n}||_{L^p(Q_t)} + \int_0^t ||H(s)||_{L^p(D)}ds\right)\\
        &\leq C\left(\sum_{j=1}^{i+1} ||v_{j,n}(0)||_{L^p(D)}+||Z_{i+1}||_{L^p(Q_t)}+\sum_{j=1}^i ||Z_{j,n}||_{L^p(Q_t)}+\int_0^t ||H(s)||_{L^p(D)}ds\right)\\
        &= C\left(\sum_{j=1}^{i+1} ||v_{j,n}(0)||_{L^p(D)}+\sum_{j=1}^{i+1}||Z_{j}||_{L^p(Q_t)}+\int_0^t ||H(s)||_{L^p(D)}ds\right),
    \end{align*}
    from which we obtain \eqref{v_i after induction mxm system}. So now for all $1\leq k\leq m$ and $t\in [0,T]$, using H\"older's inequality
    \begin{align*}
        ||v_{k,n}||_{L^p(Q_t)}&\leq C\left(\sum_{j=1}^{k} ||v_{j,n}(0)||_{L^p(D)}+\sum_{j=1}^k ||Z_{j,n}||_{L^p(Q_t)}+\int_0^t ||H(s)||_{L^p(D)}ds\right)\\
        &\leq C\left(\sum_{j=1}^{k} ||v_{j,n}(0)||_{L^p(D)}+\sum_{j=1}^k ||Z_{j,n}||_{L^p(Q_t)}+t+t^{\frac{p-1}{p}}\left(\int_0^t ||U(s)||^p_{L^p(D)}ds\right)^{\frac{1}{p}}\right)\\
        &\leq C\left(\sum_{j=1}^{k} ||v_{j,n}(0)||_{L^p(D)}+\sum_{j=1}^k ||Z_{j,n}||_{L^p(Q_t)}+t+t^{\frac{p-1}{p}} \left(\int_0^t \sum_{j=1}^k ||u_{j,n}(s)||^p_{L^p(D)}ds\right)^{\frac{1}{p}}\right)
    \end{align*}
    Taking the $p$-th power
    \[
    ||v_{k,n}||^p_{L^p(Q_t)}\leq C_{p}\left(\sum_{j=1}^{k} ||v_{j,n}(0)||^p_{L^p(D)}+\sum_{j=1}^k ||Z_{j,n}||^p_{L^p(Q_t)} +t^p+ t^{p-1}\int_0^t \sum_{j=1}^k ||u_{j,n}(s)||^p_{L^p(D)}ds \right).
    \]
    And since $||u_{k,n}||^p_{L^p(Q_t)} \leq C_p(||v_{k,n}||_{L^p(Q_t)}^p + ||Z_{k,n}||_{L^p(Q_t)}^p)$,
    \[
    ||u_{k,n}||^p_{L^p(Q_t)} \leq C_{p}\left(\sum_{j=1}^{k} ||v_{j,n}(0)||^p_{L^p(D)}+\sum_{j=1}^k ||Z_{j,n}||^p_{L^p(Q_t)} + t^p+ t^{p-1}\int_0^t \sum_{j=1}^k ||u_{j,n}(s)||^p_{L^p(D)}ds \right).
    \]
 Equivalently,
 \[
||u_{k,n}||^p_{L^p(Q_t)} \leq C_{p}\left(\sum_{j=1}^{k} ||v_{j,n}(0)||^p_{L^p(D)}+\sum_{j=1}^k ||Z_{j,n}||^p_{L^p(Q_t)} + t^p+t^{p-1}\sum_{j=1}^k ||u_{j,n}||^p_{L^p(Q_t)}ds \right).
 \]
 Now, as in the proof of Theorem \ref{thm: Theorem 2xr}, we set
   \[
   Y_n(t) := \sum_{j=1}^m ||u_{j,n}||_{L^p(Q_t)}^p.
   \]
   Then,
   \[Y_n(t) \leq C_{p}\left(\sum_{j=1}^{m} ||v_{j,n}(0)||^p_{L^p(D)}+\sum_{j=1}^m ||Z_{j,n}||^p_{L^p(Q_t)} + t^p+t^{p-1}Y_n(t)\right).\]
   This is the exact analog of \eqref{eq: Ineq for Y(t)}.  The rest of the proof is now concluded by mimicking line-by-line the proof of Theorem \ref{thm: Theorem 2xr} and invoking Lemmas \ref{lemma: For every p there exists a T_p, C_p such that bound for small times} and \ref{lemma: lift L^p integral to L^infinity}.
\end{proof}

\bibliography{References}
\bibliographystyle{plain}

\end{document}